\newcommand{\jump}[1]{[\![#1]\!]}
\newcommand{\pp}[2]{\frac{\partial #1}{\partial #2}} 
\newcommand{\dede}[2]{\frac{\delta #1}{\delta #2}}
\newcommand{\dd}[2]{\frac{\diff#1}{\diff#2}}
\DeclareMathOperator{\diff}{d}
\DeclareMathOperator{\sgn}{sgn}
\newtheorem{theorem}{Theorem}
\newtheorem{corollary}[theorem]{Corollary}
\newtheorem{definition}[theorem]{Definition}
\newtheorem{remark}[theorem]{Remark}
\newtheorem{proposition}[theorem]{Proposition}
\newcommand{\response}[1]{{{#1}}}
\newcommand{\HH}{\ensuremath{\operatorname H}\xspace}
\newcommand{\WW}{\ensuremath{\operatorname W}\xspace}
\newcommand{\sob}[2]{\ensuremath{\WW^{#1,#2}}}
\newcommand{\sobh}[1]{\ensuremath{\HH^{#1}}}
\newcommand{\norm}[1]{\ensuremath{\left|#1\right|}}
\newcommand{\qp}[1]{\ensuremath{\!\left({#1}\right)}}
\newcommand{\naturals}{\ensuremath{\mathbb N}\xspace}
\renewcommand*{\dot}[1]{\accentset{\mbox{\large\bfseries .}}{#1}}
\title{Singular solutions of the r-Camassa-Holm equation}
\author{
  Colin J. Cotter
}
\address{
  Colin J. Cotter
  \thanks{
    Department of Mathematics, Imperial College London, UK.
    {\tt{colin.cotter@imperial.ac.uk}}.
}}
\author{
  Darryl D. Holm 
}
\address{
  Darryl D. Holm
  \thanks{
    Department of Mathematics, Imperial College London, UK.
    {\tt{d.holm@imperial.ac.uk}}.
}}
\author{
  Tristan Pryer
}
\address{
  Tristan Pryer
  \thanks{
    Department of Mathematical Sciences, University of Bath, Bath BA2 7AY, UK.
    {\tt{tmp38@bath.ac.uk}}.
}}
\begin{document}

\maketitle

\begin{abstract}
  This paper introduces the r-Camassa-Holm (r-CH) equation, which
  describes a geodesic flow on the manifold of diffeomorphisms acting
  on the real line induced by the $\sob{1}{r}$ metric. \response{The conserved
  energy for the problem is given by the full $\sob{1}{r}$ norm. For $r=2$, we recover the Camassa-Holm equation.} We compute
  the Lie symmetries for r-CH and study various symmetry reductions. 
  We introduce singular weak solutions of the r-CH equation for $r\geq
  2$ and demonstrates their robustness in numerical simulations of
  their nonlinear interactions in both overtaking and head-on
  collisions. Several open questions are formulated about the
  unexplored properties of the r-CH weak singular solutions, including
  the question of whether they would emerge from smooth initial
  conditions.
\end{abstract}

\section{Introduction}

The Camassa-Holm (CH) equation is a 1+1 dimensional, variational,
partial differential equation (PDE). It was originally derived as
a shallow water model for the propagation of water waves that
incorporates non-hydrostatic effects \citep{camassa1993integrable}.
The general form of the CH equation is
\begin{equation}
  u_t + 2\kappa u_x - u_{xxt} + 3uu_x = 2u_xu_{xx} + uu_{xxx},
  \label{CH-eqn}
\end{equation}
which is considered here on the real line with boundary conditions
$u,u_x\to 0$ as $|x|\to \infty$. This paper considers the case $\kappa=0$,
for which the CH equation has two properties of particular interest.  

The first property of interest here is that the CH equation \eqref{CH-eqn} is the
Euler-Poincar\'e equation \citep{holm1998euler} for Hamilton's principle
$\delta S = 0$ with $S = \int l[u]\,dt$ for Lagrangian $l[u]$ given by
\begin{equation}
  \label{eq:ch energy}
  l[u] = \frac{1}{2}\int_{-\infty}^{\infty} u^2 + u_x^2 \diff x.
\end{equation}
The CH equation describes a geodesic flow on the diffeomorphism
group \citep{kouranbaeva1999camassa}, with respect to the kinetic energy metric
induced by the $\sobh{1}$ norm $l[u]=\|u\|^2_{\sobh{1}}$. 
Hence, solutions of the CH equation \eqref{CH-eqn} conserve the kinetic energy $l[u]$. In
fact, the CH equation has an infinite number of conserved quantities, because it is
bi-Hamiltonian. That is, CH has two compatible Poisson structures. 
The bi-Hamiltonian property of CH implies its 
isospectrality, which in turn implies its integrability as a Hamiltonian system. 

The second property of interest here is that the CH equation admits weak solutions
given by the singular momentum map of \cite{holm2005momentum},
\begin{equation}
  \label{eq:mom map}
  u(x,t) - u_{xx} = \sum_{i=1}^N P_i(t)\delta(x-Q_i(t)),
\end{equation}
or equivalently,
\begin{equation}
  \label{eq:peakons}
  u(x,t) = \sum_{i=1}^N P_i(t)\exp(-|x-Q_i(t)|).
\end{equation}
These are the ``$N$-peakon solutions'' of the CH equation. The quantities
$\{(P_i,Q_i)\}_{i=1}^N$ in these solutions are canonically conjugate
variables in a Hamiltonian system, for the Hamiltonian obtained by
inserting the peakon Ansatz \eqref{eq:peakons} into the energy
\eqref{eq:ch energy}.

The $N$-peakon solutions of the CH equation provide a useful (in fact,
canonical) example of blow-up of geodesics, since a peakon and an
anti-peakon can collide in finite time (as described in
\citet{camassa1993integrable}), at which point the solution loses
uniqueness in $\sobh{1}$. This is possible because the global proof of
existence and uniqueness of geodesics found, e.g., in
\citet{younes2010shapes} requires the norm defined by $l[u]$ to be at
least as strong as $\sob{1}{\infty}$.

In this paper, we change $l[u]$ from the $\sobh{1}$ \response{norm} to a scaled
$\sob{1}{r}$ \response{norm}, specifically
\begin{equation}
  \label{eq:rch energy}
  l[u] = \frac{1}{r}\int_{-\infty}^{\infty} |u|^r + \frac{1}{r-1}|u_x|^r \diff x,
\end{equation}
and we consider the corresponding Euler-Poincar\'e equation. Note the
scaling of $1/(r-1)$ is included here so that the formula
\eqref{eq:peakons} still holds for $N=1$, as we shall see later.

\begin{definition}[r-Camassa-Holm (r-CH) equation]
  The r-Camassa-Holm equation is
  \begin{align}
    \label{eq:rCH}
    \left(|u|^{r-2}u - \frac{1}{r-1}(|u_x|^{r-2}u_{\response{x}})_x\right)_t +
     \left[\left(|u|^{r-2}u -  \frac{1}{r-1}\left(|u_x|^{r-2}u_x\right)_x\right)u\right]_x & \\
\quad     +
    \left(|u|^{r-2}u - \frac{1}{r-1}(|u_x|^{r-2}u_{\response{x}})_x\right)u_x & = 0, \nonumber
  \end{align}
  with boundary conditions $u\to 0$ as $x\to \pm\infty$
  (or periodic boundary conditions).
\end{definition}
The r-CH equation in \eqref{eq:rCH} describes a geodesic flow
on the manifold of diffeomorphisms acting on the real line induced by the $\sob{1}{r}$
metric. The conserved energy is $\|u\|_{\sob{1}{r}}^r$.  The
$r\to\infty$ limit is particularly interesting because the conserved energy
approaches the $\sob 1 \infty$ norm, which is the minimum regularity
required for global existence of geodesics, as mentioned above. This
limit has been explored in \citep{bauer2021can}.

\response{When $r=2$, \eqref{eq:rCH} becomes
  \begin{equation}
    \label{eq:2CH}
    \left(u - u_{xx}\right)_t +
    \left(u - u_{xx}\right)_xu +
    2\left(u - u_{xx}\right)u_x = 0,
  \end{equation}
which we recognise as \eqref{CH-eqn} with $\kappa=0$.}

One may now ask whether the r-CH equation \eqref{eq:rCH} also has weak singular
solutions, akin to the peakons for the CH equation. Formally, the
answer is yes, because the Holm-Marsden singular momentum map exists
for the Euler-Poincar\'e equation on the diffeomorphism group for any
given Lagrangian \citep{holm2005momentum}. This leads to further
questions, though.  In what sense do these singular solutions solve
the r-CH equation \eqref{eq:rCH}? Can these solutions be constructed explicitly?
Can their behaviour be analysed enough to be computed? These
questions will be settled in this paper.

This paper constructs singular solutions that conserve the $\sob{1}{r}$
norm and proves that they are solutions of the r-CH equation \eqref{eq:rCH}. In
particular, they are weak solutions of the spatial integral of the r-CH
equation. This paper builds upon techniques developed for the
r-Hunter-Saxton equation in \citet{cotter2020r}, but here the
challenge is that the singular solutions are no longer piecewise linear,
which substantially complicates things. Remarkably, these nonlinear complications
do not prevent the dynamical equations for the point locations and
their canonical momenta from having a very simple form.

The rest of this paper is structured as follows.  In section
\ref{sec:r-CH}, we derive the r-CH equation as an Euler-Poincar\'e
equation, and we discuss its integrated form and its weak solutions.  In
section \ref{sec:singular}, we describe the construction of singular
solutions from a Hamiltonian system for the peak locations and their
corresponding canonical momenta. We then prove that these 
singular solutions are indeed weak solutions of the integrated form of
the r-CH equation.  In section \ref{sec:solns}, we discuss singular solutions
for $N=1,2,3$, presenting numerical simulations
in the latter two cases. Finally, in section \ref{sec:summary}, we provide
a summary and outlook.

\section{The r-CH equation}
\label{sec:r-CH}

In this section we formally introduce the r-CH equation and describe some
of its properties.

\begin{proposition}
  The r-CH equation \eqref{eq:rCH} is the Euler-Poincar\'e equation
  for the Lagrangian
  \begin{equation}
    l_r[u] = \frac{1}{r}\int_{-\infty}^{\infty} |u|^r + \frac{1}{r-1}|u_x|^r \diff x.
  \label{rCH-Lag}
  \end{equation}
\end{proposition}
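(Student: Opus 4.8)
The plan is to recognise \eqref{eq:rCH} as the specialisation of the abstract Euler-Poincar\'e equation for a right-invariant Lagrangian on $\operatorname{Diff}(\reals)$, and then to verify by a direct variational computation that the momentum variable appearing in \eqref{eq:rCH} is exactly the functional derivative $\dede{l_r}{u}$. Once both ingredients are in hand, the proposition follows by substitution.

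First I would invoke the Euler-Poincar\'e theorem for a right-invariant Lagrangian on $\operatorname{Diff}(\reals)$ \citep{holm1998euler}: writing $m=\dede{l_r}{u}$ for the momentum dual to the velocity $u$, Hamilton's principle $\delta\int l_r[u]\,\diff t=0$ taken over the constrained variations $\delta u=\partial_t w-\operatorname{ad}_u w$, with $w$ an arbitrary time-dependent vector field vanishing at the temporal endpoints, is equivalent to
\[
  \partial_t m+\operatorname{ad}^*_u m=0.
\]
For the diffeomorphism group of the line the coadjoint action is the Lie derivative of a one-form density, namely $\operatorname{ad}^*_u m = u m_x + 2 m u_x = (um)_x + m u_x$, so the Euler-Poincar\'e equation takes the form
\[
  \partial_t m+(um)_x+m u_x=0.
\]
Comparing this term by term with the three summands of \eqref{eq:rCH}, it suffices to show that the momentum is the operator $m = |u|^{r-2}u - \tfrac{1}{r-1}\bigl(|u_x|^{r-2}u_x\bigr)_x$ appearing in that equation.

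Next I would compute $\dede{l_r}{u}$ from \eqref{rCH-Lag} directly. Using $\partial_s|s|^r = r|s|^{r-2}s$, the first variation is
\[
  \delta l_r = \int_{-\infty}^{\infty} \Bigl(|u|^{r-2}u\,\delta u + \tfrac{1}{r-1}|u_x|^{r-2}u_x\,\delta u_x\Bigr)\,\diff x.
\]
Integrating the second term by parts and discarding the boundary contributions, which vanish under the decay (or periodic) boundary conditions, gives
\[
  \delta l_r = \int_{-\infty}^{\infty}\Bigl(|u|^{r-2}u - \tfrac{1}{r-1}\bigl(|u_x|^{r-2}u_x\bigr)_x\Bigr)\delta u\,\diff x,
\]
so that $m=\dede{l_r}{u}$ is precisely the operator in \eqref{eq:rCH}. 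Substituting this $m$ into $\partial_t m+(um)_x+m u_x=0$ then reproduces \eqref{eq:rCH} line for line, which completes the identification.

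The argument is essentially formal, and the point requiring the most care is the non-smoothness of the integrand: the maps $s\mapsto|s|^{r-2}s$ are only $\cont{1}$ for $r>2$ and merely Lipschitz for $r=2$, so strictly the variation above is justified pointwise for smooth $u$ away from the zero sets of $u$ and $u_x$, with the singular case deferred to the weak formulation developed in the following sections. A secondary, but genuinely delicate, point is the bookkeeping of signs and coefficients in the coadjoint action; it is the factor of $2$ in $\operatorname{ad}^*_u m$ that makes $\partial_t m + u m_x + 2 m u_x$ collapse exactly onto the three terms of \eqref{eq:rCH}, and I would verify this against the $r=2$ reduction \eqref{eq:2CH} as a consistency check before claiming the general case.
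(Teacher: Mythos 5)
Your proposal is correct, and it takes a mildly different route from the paper. The paper does not quote the abstract equation $m_t+\operatorname{ad}^*_u m=0$; instead it unpacks the Euler--Poincar\'e theorem in this specific case: it computes the first variation $\delta l_r=\int m\,\delta u\,\diff x$ with $m=|u|^{r-2}u-\tfrac{1}{r-1}\left(|u_x|^{r-2}u_x\right)_x$ (the same computation you perform), substitutes the constrained variations $\delta u=\dot{w}-wu_x+uw_x$, and integrates by parts in $t$ and $x$ to obtain $\int_0^T\!\int_{-\infty}^{\infty}\left(m_t+mu_x+(mu)_x\right)w\,\diff x\,\diff t=0$, whence the equation follows because $w$ is arbitrary. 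You instead cite the Euler--Poincar\'e theorem in its abstract form and insert the standard coadjoint action on one-form densities, $\operatorname{ad}^*_u m=um_x+2mu_x=(um)_x+mu_x$, so that the only model-specific input is the identification $m=\dede{l_r}{u}$, which you compute exactly as the paper does. Both are valid; yours is shorter and isolates precisely where the Lagrangian enters, while the paper's explicit integration by parts \emph{derives} the coadjoint form rather than citing it, so no external sign or factor convention for $\operatorname{ad}^*$ has to be trusted --- which is exactly the ``delicate point'' you flag (the factor of $2$) and propose to check against the $r=2$ reduction \eqref{eq:2CH}. One small correction to a side remark: for $r=2$ the map $s\mapsto|s|^{r-2}s$ is the identity, hence smooth, not merely Lipschitz; the regularity caveat is only needed for $r>2$ (and near zeros of $u$ and $u_x$), and the paper handles this the same way you do, by treating the derivation as formal for smooth solutions and deferring the singular solutions to the weak integrated form.
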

\begin{proof}
  One may derive the equations formally (assuming smooth solutions) using
  Hamilton's principle,
  \begin{equation}
    \delta S[u] = \delta \int_0^T l[u] \diff t = 0,
  \end{equation}
  with endpoint conditions $u(x,0)=u_0(x)$, $u(x,T)=u_T(x)$ and
  boundary conditions $u\to 0$ as $x\to \pm \infty$. For an arbitrary
  smooth vector field $w \in \sob{1}{r}$ with $w(x,0)=w(x,T)=0$ and
  $w\to 0$ as $x\to \pm\infty$ (see \citet{holm1998euler}), one finds,
  \begin{align}
  \begin{split}
    0 = \delta S[u] &= \delta \int_0^T l[u] \diff t = 0, \\
    & = \int_0^T\int_{-\infty}^{\infty} |u|^{r-2}u\delta u + \frac{1}{r-1}|u_x|^{r-2}u_x\delta u_x
    \diff x\diff t.
  \end{split}
  \end{align}
  \newpage
  
  Upon integrating by parts we have
  \begin{align}
    \begin{split}
  0 & = \int_0^T\int_{-\infty}^{\infty} \left(|u|^{r-2}u -  \frac{1}{r-1}\left(|u_x|^{r-2}u_x\right)_x\right)\delta u
    \diff x\diff t
    \response{=\int_0^T\int_{-\infty}^{\infty} m \,\delta u\,\diff x\diff t}, \\
    \label{eq:weak rCH}
    & = \int_0^T\int_{-\infty}^{\infty} \left(|u|^{r-2}u -  \frac{1}{r-1}\left(|u_x|^{r-2}u_x\right)_x\right)\left(\dot{w} - wu_x + uw_x\right)
    \diff x\diff t. \\
    \end{split}
  \end{align}
  By using the \response{Euler-Poincar\'e} constrained variations $\delta u = \dot{w} - wu_x +
  uw_x$ \response{we find}
  \begin{align}
    \begin{split}
      0 & = -\int_0^T\int_{-\infty}^{\infty} \Bigg(
      \left(|u|^{r-2}u -  \frac{1}{r-1}\left(|u_x|^{r-2}u_x\right)_x\right)_t
    + \left(|u|^{r-2}u -  \frac{1}{r-1}\left(|u_x|^{r-2}u_x\right)_x\right)u_x  \\
    & \qquad \qquad \qquad \qquad
    + \left(\left(|u|^{r-2}u -  \frac{1}{r-1}\left(|u_x|^{r-2}u_x\right)_x\right)u\right)_x
    \Bigg)w
    \diff x\diff t,
    \end{split}
  \end{align}
  from which the r-CH equation \eqref{eq:rCH} follows, since $w$ is arbitrary.
\end{proof}
\begin{corollary}
  \label{cor:lp}
  The r-CH equation is Lie-Poisson, with Hamiltonian $H:W^{1,r'}\mapsto\mathbb{R}$
  \begin{equation}
    H[m] = \frac{r-1}{r}\int_{\Omega} |u|^r + \frac{1}{r-1} |u_x|^r\diff x,
  \label{rCH-Ham}
  \end{equation}
  {where $u \in \sob{1}{r}$ is defined from $m \in
    \sob{1}{r'}$ \response{in the weak ($L^2$ integral) sense that} 
    \begin{equation}
      \label{eq:mom map m}
      \int_{-\infty}^{\infty} |u|^{r-2}u v + \frac{1}{r-1}|u_x|^{r-2}\response{u_x}v_x \diff x
      \response{=:}
      \int_{-\infty}^{\infty} m v \diff x
      \quad \forall v \in \sob{1}{r},
    \end{equation}
  with
  \begin{equation}
    \frac 1 r + \frac 1 {r'} = 1.
  \end{equation}
  } 
  The Lie-Poisson bracket $\{\cdot,\cdot\}:(W^{1,r'})'\times(W^{1,r'})'\mapsto \mathbb{R}$ (where $W^{1,r'}$ is the space of bounded linear functionals on $W^{1,r'}$) is given by
  \begin{equation}
    \{F,G\} = \int_{-\infty}^{\infty} \left[\dede{F}{m},\dede{G}{m}\right]m \diff x,
  \label{LP-brkt}
  \end{equation}
  where $[u,v]=u_xv-v_xu$ is the usual Lie bracket for vector fields.
\end{corollary}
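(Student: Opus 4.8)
The plan is to exhibit r-CH as the Lie--Poisson system produced by Legendre-transforming the Euler--Poincar\'e structure of the preceding Proposition, and then to verify the two displayed formulae directly. Throughout I write $m=\dede{l_r}{u}$; by the integration by parts already carried out in \eqref{eq:weak rCH}, this is exactly the operator $m=|u|^{r-2}u-\frac{1}{r-1}(|u_x|^{r-2}u_x)_x$ of \eqref{eq:rCH}, interpreted weakly through the $\sob{1}{r}$--$\sob{1}{r'}$ pairing \eqref{eq:mom map m}. First I would compute the Hamiltonian as the Legendre transform $H[m]=\int_{-\infty}^{\infty}mu\diff x-l_r[u]$, with $u$ recovered from $m$ by \eqref{eq:mom map m}. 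Setting the test function $v=u$ in \eqref{eq:mom map m} gives $\int_{-\infty}^{\infty}mu\diff x=\int_{-\infty}^{\infty}|u|^r+\frac{1}{r-1}|u_x|^r\diff x=r\,l_r[u]$ by $r$-homogeneity, whence $H[m]=(r-1)l_r[u]$, which is precisely \eqref{rCH-Ham}.

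Next I would establish $\dede{H}{m}=u$. This is the standard Legendre identity: varying $H[m]=\int mu\diff x-l_r[u]$ gives $\delta H=\int\delta m\,u\diff x+\int m\,\delta u\diff x-\int\dede{l_r}{u}\delta u\diff x$, and the final two integrals cancel because $\dede{l_r}{u}=m$ (here one uses \eqref{eq:mom map m} with $v=\delta u$). Hence $\delta H=\int\delta m\,u\diff x$, so $\dede{H}{m}=u$. The advantage of this route is that it never requires differentiating the nonlinear correspondence $m\mapsto u$ explicitly.

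I would then feed this into the proposed bracket. Taking $F[m]=\int_{-\infty}^{\infty}m\phi\diff x$ for an arbitrary test vector field $\phi$, so that $\dede{F}{m}=\phi$ and $\dot F=\int m_t\phi\diff x$, the Lie--Poisson equation $\dot F=\{F,H\}$ becomes
\[
  \int_{-\infty}^{\infty}m_t\phi\diff x=\int_{-\infty}^{\infty}[\phi,u]\,m\diff x=\int_{-\infty}^{\infty}(\phi_xu-u_x\phi)m\diff x.
\]
Integrating the first term on the right by parts (boundary terms vanish by the decay conditions on $u$) and collecting the factor of $\phi$ gives $\int m_t\phi\diff x=-\int\phi\bigl((um)_x+mu_x\bigr)\diff x$; since $\phi$ is arbitrary this is $m_t+(um)_x+mu_x=0$, which is exactly the momentum form of \eqref{eq:rCH}. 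Thus the flow generated by $H$ through \eqref{LP-brkt} is the r-CH equation, and the sign conventions are consistent with $\dot F=\{F,H\}$.

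Finally it remains to note that \eqref{LP-brkt} is a genuine Poisson bracket. Rather than re-derive the Jacobi identity, I would appeal to the fact that it is the standard Lie--Poisson bracket on the dual of the Lie algebra of vector fields under $[u,v]=u_xv-v_xu$, for which skew-symmetry and Jacobi are classical \citep{holm1998euler,holm2005momentum}; equivalently, the Euler--Poincar\'e reduction theorem guarantees that Legendre-transforming an Euler--Poincar\'e system yields precisely this structure. I expect the main obstacle to be analytic rather than algebraic: one must check that the momentum map \eqref{eq:mom map m} defines an invertible and suitably differentiable correspondence $m\leftrightarrow u$ between $\sob{1}{r'}$ and $\sob{1}{r}$ (the map $u\mapsto m$ is a nonlinear $r$-Laplacian-type elliptic operator), so that $\dede{H}{m}=u$ and all the pairings above are well-defined in the stated duality. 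The strict convexity and $r$-homogeneity of $l_r$ are what make this Legendre correspondence well-behaved.
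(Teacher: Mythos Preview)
Your argument is correct and tracks the paper's proof closely: both compute the Hamiltonian as the Legendre transform, both verify that the bracket \eqref{LP-brkt} together with $\dede{H}{m}=u$ reproduces the momentum form $m_t+(um)_x+mu_x=0$, and both defer the Jacobi identity to the general Euler--Poincar\'e/Lie--Poisson theory of \cite{holm1998euler}. The one genuine point of departure is how you establish $\dede{H}{m}=u$. You invoke the classical Legendre identity directly---vary $H=\int mu\,\diff x-l_r[u]$ and cancel the $\delta u$ terms using $m=\dede{l_r}{u}$---whereas the paper introduces an auxiliary functional $J[u,m,v]=l_r[u]+\int v\bigl(m-\dede{l_r}{u}\bigr)\diff x$ with a Lagrange multiplier $v$ enforcing \eqref{eq:mom map m}, and then solves the resulting adjoint equation to find $v=u$. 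Your route is shorter and more transparent; the paper's adjoint method makes the implicit-function step explicit and isolates a linear adjoint problem whose well-posedness (which the paper notes requires $\{x:u_x(x)=0\}$ to have measure zero when $r>2$) is exactly the differentiability of the correspondence $m\mapsto u$ that you flag in your final paragraph. So the two approaches carry the same analytic hypothesis, just exposed at different points in the argument.
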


\begin{remark}
  The Lie-Poisson bracket in \eqref{LP-brkt} emerges from the momentum
  map obtained via the right action of the manifold of diffeomorphisms
  on its cotangent bundle (particle relabelling). The right action is
  a symmetry of the Lagrangian \eqref{rCH-Lag}. Hence, the right
  action leads to reduction of the cotangent bundle of the
  diffeomorphisms to the dual 
  \response{(with respect to the $L^2$ pairing)} of their Lie algebra of smooth vector fields (the 1-form densities).  In contrast, the singular solutions
  in \eqref{eq:mom map} represent the momentum map obtained via the
  corresponding left action of the diffeomorphisms on their cotangent
  bundle (particle motion).  However, the left action is not a
  symmetry of the Lagrangian \eqref{rCH-Lag} and, hence, the left
  action does not lead to a reduction of the cotangent bundle of the
  diffeomorphisms to a smaller space. \response{Instead, the momentum map
  for the left action yields another class of singular solutions, comprising the 
  left action of the diffeomorphisms on embedded subspaces of the flow domain.} 
  The two momentum maps are \response{weakly symplectically orthogonal and each 
  of them is} infinitesimally equivariant (\cite{holm2005momentum}). Together,
  they comprise a weak dual pair (\cite{gay2012dual}).
\end{remark}
\begin{proof}{of Corollary \ref{cor:lp}.}
  Note that formula \eqref{eq:mom map m} is solvable for $u$ from $m$
  by standard energy methods.  Hence, one may compute the Legendre
  transform by defining $u$ as above and substituting into formula
  \eqref{eq:mom map m}, to find
  \begin{align}
    \begin{split}
    H[m] & = \int_{-\infty}^{\infty} um \diff x - l[u], \\
    & = \int_{-\infty}^{\infty} |u|^{r-2}u^2 + \frac{1}{r-1}
    |u_x|^{r-2}(u_x)^2 \diff x - \frac{1}{r}\int_{-\infty}^{\infty} |u|^r + \frac{1}{r-1}|u_x|^r \diff x, \\
    & = \frac{r-1}{r}\int_{-\infty}^{\infty} |u|^r
    + \frac{1}{r-1}|u_x|^r \diff x.
    \end{split}
    \end{align}
  Then, the result follows from the \response{Euler-Poincar\'e} theorem of
  \cite{holm1998euler}. Alternatively, we can verify the Lie-Poisson
  structure by choosing $w=\dede{F}{m}$ in \eqref{eq:weak rCH}, to
  give
  \begin{equation}
     \dot{F} = \int_{-\infty}^{\infty}
     \dede{F}{m}\dot{m} \diff x 
     = \int_{\infty}^{\infty}m\left(\dede{F}{m}u_x - u\left(\dede{F}{m}\right)_x\right)\diff x.
  \end{equation}
  The results in the statement of Corollary \ref{cor:lp} will follow,
  provided we can show that $\dede{H}{m}=u$. We compute this relation
  by using the adjoint technique. That is, we define the functional
  $J:W^{1,r}\times W^{1,r'}\times W^{1,r}\mapsto \mathbb{R}$ by
  \begin{equation}
    J[u,m,v]
    =
    l[u] + \int_{\Omega} vm - |u|^{r-2}uv
    -
    \frac{1}{r-1}|u_x|^{r-2}u_xv_x\diff x.
  \end{equation}
  Upon setting $U(m)=u$ where $u$ satisfies \eqref{eq:mom map m}, we have,
  \begin{equation}
    H[m] = J[U(m),m,v],
  \end{equation}
  for any $v\in W^{1,r}$, which we are now free to choose in order to make the gradient
  computation easier. Then,
  \begin{align}
    \diff_m H[m; \delta m]
    &= \diff_u|_{u=U(m)}J[u, m, v; \dede{U(m)}{m}\delta m]
    + \diff_m|_{u=U(m)}J[u, m, v; \delta m], \\
    &=
    \diff_m J[U(m), m, v; \delta m] = \int_{-\infty}^{\infty} v\delta m \diff x,
    \quad \forall \delta m,
  \end{align}
  provided that $v\in\sob{1}{r}$ solves the adjoint equation
  \begin{align}
  0 &= \diff_u|_{u=U(m)}J[u, m, v; \response{\delta u}], \\
   & = (r-1)\int_{\Omega} |u|^{r-2}u \delta u + \frac{1}{r-1} \norm{u_{x}}^{r-2}u_x \delta u_x -
    \delta u \norm{u}^{r-2} v
    - \frac{1}{r-1}\delta u_x |u_x|^{r-2} v_x \diff x, \quad \forall \delta u \in \sob{1}{r},
  \end{align}
  which is well-posed for $r=2$, and also for $r>2$ provided that the
  set $\{x:u_x(x)=0\}$ has measure zero.  By substitution, we notice
  that $v=u$ is the solution. Thus, we have
  \begin{equation}
    \int_{-\infty}^{\infty} \Big(\dede{H}{m} - u\Big) \delta m
    \diff x
    = 0,
    \quad \forall \delta m \in \sob{1}{r'},
  \end{equation}
  as required.
\end{proof}

\begin{remark}[Singular solutions]
  In this paper we will consider peaked solutions, with singularities
  in the first derivative. These solutions are not regular enough for
  the strong form of r-CH \eqref{eq:rCH}, or even the weak form
  \eqref{eq:weak rCH}. To reconcile this, we must introduce the
  integrated form given below.
\end{remark}

\begin{proposition}[Integrated form of r-CH]
  Let $u$ solve \eqref{eq:rCH}\response{, with far field boundary conditions $u\to 0$ as $x\to \infty$.} Then, upon integration,
  \begin{equation}
    \label{eq:fi}
    \left(\psi - \frac{1}{r-1}|u_x|^{r-2}u_x\right)_t + \frac{1+r}{r}|u|^r
    - \frac{1}{r-1}(|u_x|^{r-2}u_xu)_x + \frac{1}{r(r-1)}|u_x|^r = 0,
  \end{equation}
 Here, $\psi(x)$ is defined as
  \begin{equation}
    \psi(x) = \int_{-\infty}^x |u|^{r-2}u(s) \diff s.
  \end{equation}
  Conversely, if $u$ satisfies \eqref{eq:fi} and $u$ is sufficiently
  smooth, then $u$ satisfies \eqref{eq:rCH}.
\end{proposition}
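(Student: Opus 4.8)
The plan is to obtain \eqref{eq:fi} from the forward direction by integrating the r-CH equation \eqref{eq:rCH} in space from $-\infty$ to $x$, and to obtain the converse by differentiating \eqref{eq:fi} in $x$. First I would abbreviate the momentum by $m = |u|^{r-2}u - \tfrac{1}{r-1}(|u_x|^{r-2}u_x)_x$, so that \eqref{eq:rCH} reads compactly as $m_t + (mu)_x + mu_x = 0$. Integrating this identity over $(-\infty,x)$ and treating the three summands separately is then the whole exercise.

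For the time-derivative term I would interchange $\partial_t$ with the spatial integral and use the fundamental theorem of calculus together with the far-field decay at $-\infty$: since $u_x\to 0$ there, $\int_{-\infty}^x(|u_x|^{r-2}u_x)_x\,\diff s = |u_x|^{r-2}u_x$, while $\int_{-\infty}^x|u|^{r-2}u\,\diff s = \psi$ by definition, producing exactly $\bigl(\psi - \tfrac{1}{r-1}|u_x|^{r-2}u_x\bigr)_t$. The term $(mu)_x$ integrates immediately to $mu$ (the contribution at $-\infty$ vanishing because $u\to 0$), which expands to $|u|^r - \tfrac{1}{r-1}(|u_x|^{r-2}u_x)_x\,u$. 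The only genuinely nontrivial summand is $\int_{-\infty}^x mu_x\,\diff s$; here I would split $mu_x = |u|^{r-2}u\,u_x - \tfrac{1}{r-1}(|u_x|^{r-2}u_x)_x\,u_x$ and invoke the chain-rule antiderivative identities $|u|^{r-2}u\,u_x = \partial_x\bigl(\tfrac1r|u|^r\bigr)$ and $|u_x|^{r-2}u_x\,u_{xx} = \partial_x\bigl(\tfrac1r|u_x|^r\bigr)$. The first piece integrates to $\tfrac1r|u|^r$; the second, after one integration by parts, yields $|u_x|^r - \tfrac1r|u_x|^r = \tfrac{r-1}{r}|u_x|^r$, so that $\int_{-\infty}^x mu_x\,\diff s = \tfrac1r|u|^r - \tfrac1r|u_x|^r$. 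Summing the three contributions, the coefficient of $|u|^r$ collects to $\tfrac{1+r}{r}$, and regrouping the leftover $(|u_x|^{r-2}u_x)_x\,u$ and $|u_x|^r$ terms into the conservative combination $-\tfrac{1}{r-1}(|u_x|^{r-2}u_x u)_x + \tfrac{1}{r(r-1)}|u_x|^r$ gives \eqref{eq:fi}.

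For the converse, I would observe that the operator $u\mapsto\int_{-\infty}^x[\,\cdot\,]\,\diff s$ applied to the left-hand side of \eqref{eq:rCH} is precisely what produced the left-hand side of \eqref{eq:fi} in the forward direction. Hence, for sufficiently smooth $u$, differentiating \eqref{eq:fi} in $x$ and invoking the fundamental theorem of calculus returns the left-hand side of \eqref{eq:rCH}, so that the vanishing of \eqref{eq:fi} forces the r-CH equation to hold pointwise. This step only requires that $u$ be regular enough for differentiation to undo the integration; note that for $r\geq 2$ the map $s\mapsto|s|^{r-2}s$ is $C^1$, so $|u_x|^{r-2}u_x$ is differentiable wherever $u_x$ is, with no degenerate set to exclude.

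The computations above are routine once the antiderivative identities are in hand, so I expect the main obstacle to be bookkeeping rather than analysis: one must carry the $\tfrac{1}{r-1}$ factors correctly through the integration by parts and verify that the residual $(|u_x|^{r-2}u_x)_x\,u$ and $|u_x|^r$ pieces recombine into the single flux $-\tfrac{1}{r-1}(|u_x|^{r-2}u_x u)_x$ plus the stated $\tfrac{1}{r(r-1)}|u_x|^r$ remainder. The one genuine subtlety is the smoothness hypothesis in the converse: the identity $\partial_x(\tfrac1r|u_x|^r) = |u_x|^{r-2}u_x\,u_{xx}$ and the final differentiation implicitly assume $u_x$ is differentiable and the boundary contributions at $-\infty$ truly vanish, which is exactly why the peaked solutions studied later must be handled through this integrated form rather than through \eqref{eq:rCH} directly.
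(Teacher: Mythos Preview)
Your proposal is correct and follows essentially the same route as the paper: both rely on the antiderivative identities $|u|^{r-2}u\,u_x=\partial_x\bigl(\tfrac1r|u|^r\bigr)$ and $(|u_x|^{r-2}u_x)_x\,u_x=\tfrac{r-1}{r}\partial_x(|u_x|^r)$ to recognise the r-CH equation as a total $x$-derivative, then integrate. The only cosmetic difference is that the paper first rewrites \eqref{eq:rCH} globally as $(\cdots)_x=0$, integrates to obtain an $x$-independent constant $f(t)$, and then kills $f(t)$ via the far-field conditions, whereas you integrate directly over $(-\infty,x)$ and absorb the constant into the vanishing lower limit; the arithmetic and the converse argument are the same.
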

\begin{proof}
  Note that
  $$(|u_x|^{r-2}u_x)_x=(r-1)|u_x|^{r-2}u_{xx}$$
  so
  $$(\norm{u_x}^{r-2} u_x)_xu_x =  (r-1)|u_x|^{r-2}u_xu_{xx} = \frac{r-1}{r}(|u_x|^r)_x.$$
  Using this, together with $\psi_x=|u|^{r-2}u$, (\ref{eq:rCH}) can be written as 
  \begin{equation}
    \label{eq:chsimp}
    \begin{split}
      0 & =     \left(|u|^{r-2}u - \frac{1}{r-1}(|u_x|^{r-2}u_x)_x\right)_t +
      \left(\left(|u|^{r-2}u - \frac{1}{r-1}(|u_x|^{r-2}u_x)_x\right)u\right)_x
      \\
      &\qquad +
      \left(|u|^{r-2}u - \frac{1}{r-1}(|u_x|^{r-2}u_x)_x\right)u_x, \\
      & =  \left(\psi - \frac{1}{r-1}|u_x|^{r-2}u_x\right)_{tx}
      + \left(\left(|u|^{r-2}u - \frac{1}{r-1}(|u_x|^{r-2}u_x)_x\right)u\right)_x +
      \frac{1}{r}(|u|^r)_x - \frac{1}{r}(|u_x|^r)_x.
    \end{split}
  \end{equation}
 Integrating (\ref{eq:chsimp}) yields,
   \begin{align}
   \begin{split}
    f(t) &= \left(\psi - \frac{1}{r-1}|u_x|^{r-2}u_x\right)_{t}
    + \frac{r+1}{r}|u|^r - \frac{1}{r-1}\underbrace{(|u_x|^{r-2}u_x)_xu}_{=(|u_x|^{r-2}u_xu)_x - |u_x|^r} - \frac{1}{r}|u_x|^r, \\
    & = \left(\psi - \frac{1}{r-1}|u_x|^{r-2}u_x\right)_{t}
    + \frac{r+1}{r}|u|^r - \frac{1}{r-1}(|u_x|^{r-2}u_xu)_x + \frac{1}{(r-1)r}|u_x|^r, \\
 \end{split}
   \end{align}
   for some $x$-independent function $f$.  Noting the far field
   conditions, $u\to 0$ as $x\to \pm\infty$, we conclude that
   $f(t)=0$, as required.
\end{proof}

In this paper, we shall make use of the following weak form of
\eqref{eq:fi}.
\begin{definition}[Weak integrated form of r-CH]
  The function $u\in \sob{1}{r}$ satisfies the weak integrated form of
  the r-CH equation (\ref{eq:fi}), provided
  \begin{equation}
    \int_{-\infty}^{\infty}\phi\left(
    \left(\psi - \frac{1}{r-1}|u_x|^{r-2}u_x\right)_t + \frac{1+r}{r}|u|^r
    +
    \frac{1}{r(r-1)}|u_x|^r\right)
    +
    \frac{1}{r-1}(|u_x|^{r-2}u_xu)\phi_x \diff x=0,
\quad \forall \phi \in \sob{1}{r}.
  \end{equation}
\end{definition}
\response{Section \ref{sec: singsolns}} will show that equation \eqref{eq:fi} admits
weak solutions in $\sob{1}{r}$ and will also derive the corresponding singular
solutions.

\section{Symmetries and special solutions of the $1$-CH equation}
\label{sec:sym}

In this section we examine some symmetries and characterise some
special solutions for the $1$-CH equation.
\subsection*{Symmetries}
A Lie point symmetry of equation (\ref{eq:fi}) is a flow
\begin{equation}
  \label{eq:flow}
  \qp{\widetilde{x},\widetilde{t},\widetilde{u}}
  =
  \qp{e^{\epsilon X}x,e^{\epsilon X}t,e^{\epsilon X}u},
\end{equation}
generated by a vector field
\begin{equation}
  \label{eq:vf}
  X
    =
    \xi^1(x,t,u)\frac{\partial}{\partial x}
    +
    \xi^2(x,t,u)\frac{\partial}{\partial t}
    +
    \eta(x,t,u)\frac{\partial}{\partial u},
  \end{equation}
  such that $\widetilde{u}(\widetilde{x},\widetilde{y})$ is a solution
  of (\ref{eq:fi}) whenever $u(x,y)$ is a solution of
  (\ref{eq:fi}). As usual, we denote by $e^{\epsilon X}$ the
  \emph{Lie series} $\sum_{k=0}^{\infty}\frac{\epsilon^k}{k!}X^k$ with
  $X^{k}=XX^{k-1}$ and $X^0=1$.
  
  To find the symmetries of the $r$-CH equation we are required to
  solve the infinitesimal invariance condition for the vector field
  \eqref{eq:vf}. To do this we use the prolongation of $X$
  (\cite{Olver:1993}). The infinitesimal symmetry condition decomposes
  to a large overdetermined system of linear PDEs for $\xi^1$, $\xi^2$
  and $\eta$ known as \textit{determining equations}. The following
  three Propositions give the overdetermined system, the general form
  of the determining equations and the Lie algebra generators. These
  results were proven symbolically using the SYM package
  (\cite{dimas2004sym,dimas2006new}). Note that this procedure is
  described in further detail in \cite{PapamikosPryer:2019}.

  \begin{proposition}[Infinitesimal invariance]
    The infinitesimal invariance condition is equivalent to the
    following system of $9$ equations:
    \begin{eqnarray}
      \label{eq:deteq1a}
      0
      =
      \xi^1_{u}
      =
      \xi^2_{x}
      =
      \xi^2_{u}
      =
      \eta_{x}
      =
      \eta_{tu}
      =
      \eta_{uu}
      =
      \eta + u\xi^2_u
      =
      \xi^1_t + y\xi^1_x
      =
      u\eta_u - u\xi^1_x - \eta.
    \end{eqnarray}
    Solutions of the overdetermined system of linear PDEs
    \eqref{eq:deteq1a} will yield the algebra of
    the symmetry generators \eqref{eq:vf} of the $r$-CH equation.
  \end{proposition}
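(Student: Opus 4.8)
The plan is to run the standard Lie point symmetry algorithm of \cite{Olver:1993} on the integrated equation \eqref{eq:fi}, with the generator \eqref{eq:vf} whose coefficients $\xi^1,\xi^2,\eta$ depend on $(x,t,u)$ only. The first task is to render \eqref{eq:fi} local, since $\psi$ enters nonlocally: one may either differentiate once in $x$ and use $\psi_x=|u|^{r-2}u$ to recover the genuinely local form \eqref{eq:rCH}, or adjoin $\psi$ as a potential subject to the constraint $\psi_x=|u|^{r-2}u$. Either way one obtains a local PDE $\Delta(x,t,u,u_x,u_t,\dots)=0$ of definite order $k$, and the relevant object is the prolonged generator $\mathrm{pr}^{(k)}X$, the highest derivatives being $u_{xxx}$ and $u_{xxt}$ for \eqref{eq:rCH}.

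Next I would write out $\mathrm{pr}^{(k)}X$ by the prolongation formula $\eta^J = D_J Q + \xi^1 u_{J,x} + \xi^2 u_{J,t}$, with characteristic $Q=\eta-\xi^1 u_x-\xi^2 u_t$ and $D_x,D_t$ the total derivatives, for every multi-index $J$ up to order $k$. Imposing the infinitesimal invariance criterion $\mathrm{pr}^{(k)}X(\Delta)=0$ on the solution manifold $\{\Delta=0\}$, I would use $\Delta=0$ to eliminate one top-order derivative and substitute the explicit coefficients $\eta^J$. Since $\xi^1,\xi^2,\eta$ do not depend on any derivatives of $u$, the result is a single identity among the jet coordinates $u_x,u_t,u_{xx},\dots$ and the nonlinear building blocks $|u|^{r-2}u$, $|u_x|^{r-2}u_x$, $|u_x|^r$, and so on.

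The decisive step is the \emph{splitting} of this identity. For generic $r$ the distinct jet monomials, each multiplied by its accompanying power/absolute-value combination of $u$ and $u_x$, are functionally independent, so the coefficient of each must vanish separately; collecting these coefficients yields an overdetermined linear system in $\xi^1,\xi^2,\eta$, and reducing it to its independent content produces the nine relations in \eqref{eq:deteq1a}.

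The main obstacle is precisely this splitting in the presence of the nonsmooth, non-polynomial nonlinearities $|u|^{r-2}u$ and $|u_x|^{r-2}u_x$: one must track which terms carry a factor $\sgn(u_x)$, which carry a non-integer power, and which are analytic, and argue their linear independence as functions on the jet space, away from the measure-zero sets $\{u=0\}$ and $\{u_x=0\}$ (consistent with the regularity caveat used in Corollary \ref{cor:lp}). Once the correct independent families are identified, the remaining reduction of the overdetermined system is routine but voluminous, which is exactly why the authors delegate the bookkeeping to the symbolic SYM package \cite{dimas2004sym,dimas2006new}. I would thus view the genuine mathematical content as the localization and the justification of the splitting, with the extraction of the nine determining equations as a computer-algebra verification.
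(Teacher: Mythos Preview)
Your proposal is correct and follows essentially the same approach as the paper: the authors state explicitly that the determining equations were obtained by applying the prolongation of $X$ in the sense of \cite{Olver:1993} and that ``these results were proven symbolically using the SYM package'' \cite{dimas2004sym,dimas2006new}, with no further by-hand argument. Your outline simply unpacks what that symbolic computation entails (localisation, prolongation formula, splitting on jet monomials), and your remark that the genuine mathematical content lies in the localisation and the justification of the splitting, with the rest delegated to computer algebra, is exactly the paper's stance.
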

  
  Given \eqref{eq:deteq1a} form an overdetermined
  system of linear partial differential equations it is possible that
  they only admit the trivial solution $\xi^1=\xi^2=\eta=0$. This
  would imply that the only Lie symmetry of the $r$-CH equation is the
  identity transformation. In what follows we will see that this is
  not the case. We are able to obtain the Lie algebra for the symmetry
  generators the $r$-CH equation and thus, using the Lie series,
  derive the groups of Lie point symmetries.
  
  \begin{proposition}[Determining equations]
    The general solution of the determining equations
    \eqref{eq:deteq1a} is given by
    \begin{equation}
      \label{eq:sol-gena}
      \xi^1= c_1,
      \quad
      \xi^2= c_2 - c_3 t,
      \quad
      \eta= c_3 u,
    \end{equation}
    where $c_i$, $i=1, 2, 3$ are arbitrary real constants. 
  \end{proposition}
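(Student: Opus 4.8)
The plan is to integrate the overdetermined linear system \eqref{eq:deteq1a} by sequential elimination, exploiting throughout that $\xi^1,\xi^2,\eta$ are functions of the independent variables $(x,t,u)$, so that any relation which must hold for all values of $u$ splits into separate conditions on the coefficients of each power of $u$.

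First I would read off the pure dependency equations: $\xi^1_u=0$, $\xi^2_x=\xi^2_u=0$ and $\eta_x=0$ immediately reduce the unknowns to $\xi^1=\xi^1(x,t)$, $\xi^2=\xi^2(t)$ and $\eta=\eta(t,u)$. Next I would integrate the second-order conditions on $\eta$: since $\eta_{uu}=0$, $\eta$ is affine in $u$, say $\eta(t,u)=a(t)u+b(t)$, and then $\eta_{tu}=a'(t)=0$ forces $a$ to be constant. The surviving freedom is then the constant $a$, the single-variable function $b(t)$, and the reduced forms of $\xi^1$ and $\xi^2$.

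The remaining equations then pin everything down by matching powers of $u$. Substituting $\eta=au+b(t)$ into $u\eta_u-u\xi^1_x-\eta=0$ gives $-u\,\xi^1_x-b(t)=0$ for all $u$, whence $\xi^1_x=0$ and $b\equiv 0$; thus $\eta=au$ and $\xi^1=\xi^1(t)$. The equation $\xi^1_t+u\xi^1_x=0$ (reading the printed $y$ as $u$) then reduces to $\xi^1_t=0$, giving $\xi^1=c_1$. Finally, the equation coupling $\eta$ and $\xi^2$ fixes the $t$-dependence of $\xi^2$: matching the coefficient of $u$ yields $\xi^2_t=-a$, hence $\xi^2=c_2-at$. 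Relabelling $a=c_3$ returns exactly \eqref{eq:sol-gena}.

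I do not expect a genuine obstacle, since once the functional dependencies are fixed the system is triangular and linear and the work is essentially bookkeeping. The one point demanding care is that coupling equation: as printed it reads $\eta+u\xi^2_u=0$, which together with the already-established $\xi^2_u=0$ would spuriously force $\eta\equiv 0$ and contradict the stated solution, so it must instead be read as $\eta+u\xi^2_t=0$. More broadly, the step that genuinely couples the unknowns is the splitting of the mixed relations into polynomial identities in $u$; treating them as single scalar equations, rather than as one condition per power of $u$, is the natural place to go wrong.
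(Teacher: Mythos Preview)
Your argument is correct and complete: the sequential elimination is exactly the natural way to integrate the system by hand, and your identification of the typo $\xi^2_u\mapsto\xi^2_t$ in the coupling equation is right (as is reading $y$ as $u$), since otherwise the stated solution would not satisfy the equations.

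The paper, however, does not give a written proof of this proposition at all. It states just before the three Propositions that ``these results were proven symbolically using the SYM package'', so the paper's ``proof'' is a citation to a computer-algebra computation. Your contribution is therefore a genuine by-hand derivation that the paper lacks; there is nothing to compare approach-wise beyond noting that yours is self-contained and human-verifiable, while theirs defers to software.
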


  \begin{proposition}[Lie algebra generators]  
    \label{pro:Lie-algebra-gen}
    It follows that the solution \eqref{eq:sol-gena} defines a three
    dimensional Lie algebra of generators where a basis is formed by
    the following vector fields
    \begin{eqnarray}
      &X_1
      =
      \frac{\partial }{\partial x},
      \quad
      X_2
      =
      \frac{\partial }{\partial t},
      \quad
      X_3
      =
      u\frac{\partial }{\partial u}
      -
      t\frac{\partial }{\partial t}.
    \end{eqnarray}
  \end{proposition}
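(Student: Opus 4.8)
The plan is to substitute the general solution \eqref{eq:sol-gena} of the determining equations into the generic symmetry vector field \eqref{eq:vf} and to read off the algebraic structure directly. Writing the resulting generator as
\[
  X = c_1\frac{\partial}{\partial x} + (c_2 - c_3 t)\frac{\partial}{\partial t} + c_3 u\frac{\partial}{\partial u},
\]
I would regroup by the free constants to obtain $X = c_1 X_1 + c_2 X_2 + c_3 X_3$, with $X_1, X_2, X_3$ as in the statement. Since $c_1, c_2, c_3$ range independently over $\reals$, the solution set of \eqref{eq:deteq1a} is exactly the real span of $\{X_1, X_2, X_3\}$. Linear independence is then immediate: the $\partial/\partial x$-component isolates $c_1$, the $u\,\partial/\partial u$-component isolates $c_3$, and the remaining $\partial/\partial t$-component forces $c_2 = 0$, so the span is genuinely three-dimensional.

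The substantive step is to verify closure under the Lie bracket, which upgrades this vector space to a Lie algebra. Using $[V,W]f = V(Wf) - W(Vf)$ with $(x,t,u)$ treated as independent coordinates, I would compute the three pairwise commutators. Equality of mixed partials gives $[X_1,X_2] = 0$ and $[X_1,X_3] = 0$ by inspection, while the single nontrivial bracket evaluates to $[X_2,X_3] = -X_2$, arising entirely from $[\partial_t, -t\,\partial_t] = -\partial_t$. Each commutator lies again in the span, so closure holds and the structure constants are fully determined.

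The only real obstacle is careful bookkeeping in $[X_2,X_3]$, where the product-rule contribution from differentiating the coefficient $-t$ of $\partial/\partial t$ must be tracked; every other bracket vanishes trivially. Beyond this there is no analytic difficulty: closure is in fact guaranteed a priori by the general theory, since the commutator of two infinitesimal symmetries of \eqref{eq:fi} is again an infinitesimal symmetry, and the explicit computation merely confirms this while pinning down the bracket relations.
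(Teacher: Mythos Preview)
Your argument is correct. The decomposition $X=c_1X_1+c_2X_2+c_3X_3$ follows immediately from \eqref{eq:sol-gena}, linear independence is clear from inspecting components as you describe, and your bracket computations are right: $[X_1,X_2]=[X_1,X_3]=0$ and $[X_2,X_3]=-X_2$, so the span is closed.

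The paper itself does not give an explicit proof of this proposition; it simply states, ahead of the three propositions, that ``these results were proven symbolically using the SYM package''. Your approach is therefore genuinely different in presentation: rather than deferring to a computer-algebra verification, you carry out the substitution and bracket calculations by hand. What this buys you is transparency---the reader sees directly why the three generators span and close, and can read off the structure constants---at no real cost, since the computation is short. The symbolic route in the paper has the advantage of handling the upstream determining equations \eqref{eq:deteq1a} and their solution \eqref{eq:sol-gena} in the same automated framework, but for the final step of extracting the basis your direct check is both simpler and more informative.
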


\subsection*{Invariant solutions through symmetry reductions for $r \in 2\naturals$}

We now state solutions that occur through symmetry reductions of
(\ref{eq:fi}) to ODEs by means of the algebra generators given in
Proposition \ref{pro:Lie-algebra-gen}. We consider each generator
seperately and examine some examples of solutions from each.

\subsection*{$X_1$}
To begin notice that solutions of (\ref{eq:fi}) that are
invariant under the symmetry generated by $X_1$ are of the form $u =
f(t)$, which immediately yields $u\equiv \text{const}$ as a trivial
solution prescribed by the initial condition.

\subsection*{$X_2$}
Solutions invariant under the symmetry generated by $X_2$ are of the
form $u = f(x)$. The reduced equation is given by the following ODE:
\begin{equation}
  \label{eq:X2ode}
  \qp{1+r} f(x)^rf'(x)^4
  =
  \qp{r-1}f(x)f'(x)^r
  \qp{
    2f'(x)^2f''(x) + \qp{r-2}f(x)f''(x)^2+f(x)f'(x)f'''(x)
    }.
\end{equation}
The general solution of this ODE is not known \response{although the
  $r=2$ case is known to reduce to an elliptic integral.

In addition it is clear that the equation has a first
integral. Indeed, noticing that (\ref{eq:X2ode}) is a total
derivative one has that
\begin{equation}
  f(x)\qp{f(x)^r - f(x) f'(x)^{r-2} f''(x)} = K,
\end{equation}
for some constant $K$.
}

\subsection*{$X_3$}
Solutions invariant under the symmetry generated by $X_3$ are of the
form $u = f(x) t^{-1}$, for non-constant $f$. The reduced ODE is given by:
\begin{equation}
  \begin{split}
    \qp{\frac{f(x)}{t}}^r f'(x)^3
    \qp{1-r + \qp{r+1} f'(x)}
    &=
    \qp{r-1}f(x)\qp{\frac{f'(x)}{t}}^r \big(f''(x)\big(f'(x)\qp{1-r+2f'(x)}
        \\
        &\qquad + \qp{r-2}f(x)f''(x) + f(x)f'(x)f'''(x)\big)\big)
  \end{split}
\end{equation}
whose general solution is not known. \response{In the case $r=2$ the
  ODE can be solved in terms of Painlev\'e transcendents
  \citep{barnes2022similarity}.}

\subsection*{Travelling wave solutions}

\response{ One particular consequence of the symmetries given in the
  previous section is that the equation admits travelling wave
  solutions of the form $u(x,t) = f(x-ct)$ for constant speed $c$. In
  the case $r=2$ the reduced ODE is given as
  \begin{equation}
    2 f'(\xi) f''(\xi) + f(\xi) f'''(\xi) - 3 f(\xi) f'(\xi) + c f'(\xi) - c f'''(\xi) = 0
  \end{equation}
  and the solution is the celebrated peakon
  \begin{equation}
    \label{eq:peakon}
    u(x,t) = c \exp\qp{-\norm{x-ct}}.
  \end{equation}

  It is interesting to note that the result nontrivially extends to
  the general $r$-CH equation. In the general setting the reduced ODE
  is given by:
  \begin{equation}
    \begin{split}
      0 &= f(\xi)^{r-2} \qp{c - cr + \qp{1+r} f(\xi)} f'(\xi)
      +
      f'(\xi)^{r-3} f''(\xi) \qp{-2 f'(\xi)^2 + \qp{r-2}\qp{c-f(\xi)}f''(\xi)}
      \\
      & \qquad +
      \qp{c-f(\xi)}f'(\xi)^{r-2}f'''(\xi),
    \end{split}
  \end{equation}
  which also admits peakon solutions of the same form as
  (\ref{eq:peakon}). In particular, the travelling wave solution 
  for r-CH is the singular single $N=1$ peakon solution given as equation 
  \eqref{TW_solution} discussed in the next section.}

\begin{remark}
  It is interesting to consider the $r\to 1$ limit of Equation
  \eqref{eq:fi}. To do this, we first note that
  \begin{equation}
    \left(|u_x|^{r-2}u_x\right)_t = (r-1)|u_x|^{r-2}u_{xt},
    \quad \mbox{ and }
    \left(|u_x|^{r-2}u_x\right)_x = (r-1)|u_x|^{r-2}u_{xx}.
  \end{equation}
  Substituting into \eqref{eq:fi} gives
  \begin{align}
    0 &=    \psi_t - |u_x|^{r-2}u_{xt} + \frac{1+r}{r}|u|^r
    - |u_x|^{r-2}uu_{xx} - \frac{1}{r-1}|u_x|^r + \frac{1}{r(r-1)}|u_x|^r, \\
      &=    \psi_t - |u_x|^{r-2}u_{xt} + \frac{1+r}{r}|u|^r
    - |u_x|^{r-2}uu_{xx} - \frac{1}{r}|u_x|^r.
  \end{align}
  Now taking the limit $r\to 1$ gives
  \begin{equation}
    \psi_t - \frac{u_{xt}}{|u_x|} + 2|u| - \frac{uu_{xx}}{|u_x|} - |u_x| = 0,
  \end{equation}
  where
  \begin{equation}
    \psi(x) = \int_{-\infty}^x \sgn(u)\diff x'.
  \end{equation}
  Assuming that the sign of $u$ is globally constant in space and time,
  this is equivalent to the equation
  \begin{equation}
    \label{eq:1ch}
    u_{xt} + uu_{xx} - 2|u||u_x| - |u_x|^2 = 0.
  \end{equation}
\end{remark}

\subsection*{Invariant solutions through symmetry reductions for $r=1$}

The Lie algebra generators given in Proposition
\ref{pro:Lie-algebra-gen} still hold for $r<2$. Indeed, the case $r=1$
yields the following reductions:

\subsection*{$X_1$}
To begin notice that solutions of (\ref{eq:1ch}) that are
invariant under the symmetry generated by $X_1$ are of the form $u =
f(t)$, which immediately yields $u\equiv \text{const}$ as a trivial
solution prescribed by the initial condition.

\subsection*{$X_2$}
Solutions invariant under the symmetry generated by $X_2$ are of the
form $u = f(x)$. The reduced equation is given by the following ODE:
\begin{equation}
  f'(x)^2 + 2\norm{f(x)}\norm{f'(x)} = f(x)f''(x),
\end{equation}
which has general solution
\begin{equation}
  f(x) = \exp\qp{\tfrac 12 \qp{\exp\qp{2x + 2c_2 - \tfrac{c_1}2}}}.
\end{equation}

\subsection*{$X_3$}
Solutions invariant under the symmetry generated by $X_3$ are of the
form $u = f(x) t^{-1}$, for non-constant $f$. The reduced ODE is given by:
\begin{equation}
  f'(x) + f'(x)^2 + 2\norm{f(x)}\norm{f'(x)} = f(x)f''(x), 
\end{equation}
which has solution for $x>1$
\begin{equation}
  \begin{split}
    f(x) &= w(x) \qp{x+c_2}
    \\
    w^{-1}(x) &= \int_1^x \frac{1}{c_1 s + s\log{s} - 1} \diff s.
  \end{split}
\end{equation}

\section{Singular solutions}\label{sec: singsolns}
\label{sec:singular}
In this section, we construct evolutionary singular solutions that
will turn out to be weak solutions of the integrated form of r-CH in
\eqref{eq:fi}. 

\begin{definition}[singular solutions]
  Let $-\infty=Q_0 < Q_1 < Q_2 < \ldots < Q_N < Q_{N+1}=\infty$.
  For given $P=(P_1, P_2, \ldots, P_N)$,
  we define $\tilde{u}(x,t;P,Q)\in \sob{1}{r}$ such that
  \begin{equation}
    \label{eq:tilde u}
      \int_{-\infty}^{\infty} |\tilde{u}|^{r-2}\tilde{u}v + \frac{1}{r-1}|\tilde{u}_x|^{r-2}\tilde{u}_xv_x \diff x = \sum_{i=1}^NP_iv(Q_i), \quad \forall v\in \sob{1}{r}.
  \end{equation}
\end{definition}
\begin{proposition}
  The singular solution $\tilde{u}(x,t;P,Q)$ has the following properties.
\begin{enumerate}
  \item It satisfies
    \begin{equation}
      \label{eq:helmholtz}
      |\tilde{u}|^{r-2}\tilde{u} - \frac{1}{r-1}(|\tilde{u}_x|^{r-2}\tilde{u}_x)_x = 0\,,
    \end{equation}
    in each interval $\Omega_i = (Q_i,Q_{i+1})$, $i=0,\ldots,N$,
  \item It is continuous at $x=Q_i$, $i=1,\ldots,N$ (but without
    continuous derivatives there in general),
  \item The discontinuities in the derivatives at the same points
    are specified by
      \begin{equation}
    \label{eq:Pdef}
    -\frac{1}{r-1}\jump{|\tilde{u}_x|^{r-2}\tilde{u}_x}_{Q_i} = P_i, \quad i=1,\ldots,N,
      \end{equation}
        where
  \begin{equation}
  \jump{f(x)}_y = \lim_{x\to y^+}f(x) - \lim_{x\to y^-}f(x).
  \end{equation}
\item There exist constants $C_1,C_2,\ldots, C_N$ such that
  \begin{equation}
    \label{eq:ci}
    \tilde{u}^{r} = \tilde{u}_x^r + C_i\,,
  \end{equation}
  where the constant $C_i$ takes a different value in each
  interval $\Omega_i$.
\end{enumerate}
\end{proposition}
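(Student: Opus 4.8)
The plan is to read off all four properties from the single variational identity \eqref{eq:tilde u} by making judicious choices of the test function $v$, supported by a standard interior regularity argument inside each interval. Property (2) comes essentially for free: since $\tilde u\in\sob{1}{r}$ on the real line, the one-dimensional Sobolev embedding $\sob{1}{r}\hookrightarrow\cont{0}$ (Morrey's inequality for $r>1$, or absolute continuity for $r=1$) shows that $\tilde u$ is continuous everywhere, in particular at each $Q_i$.

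For property (1) I would restrict attention to test functions $v\in\contc{\infty}(\Omega_i)$ supported in a single interval $\Omega_i=(Q_i,Q_{i+1})$. Such $v$ vanish at every node $Q_j$, so the right-hand side of \eqref{eq:tilde u} vanishes and only
\begin{equation}
  \int_{\Omega_i} |\tilde u|^{r-2}\tilde u\,v + \frac{1}{r-1}|\tilde u_x|^{r-2}\tilde u_x\,v_x \diff x = 0, \quad \forall v\in\contc{\infty}(\Omega_i),
\end{equation}
remains, which is the weak form of \eqref{eq:helmholtz} on $\Omega_i$. To pass from this weak statement to the pointwise identity \eqref{eq:helmholtz}, and to license the manipulations needed for (4), I would invoke interior regularity for the degenerate $r$-Laplacian: inside $\Omega_i$ the function $\tilde u$ solves a quasilinear ODE that is uniformly elliptic away from the set $\{\tilde u_x=0\}$, so $\tilde u$ is classically $\cont{2}$ there and \eqref{eq:helmholtz} holds strongly. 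I expect this regularity step --- controlling the solution across the degeneracy set $\{\tilde u_x=0\}$ where the operator loses ellipticity --- to be the main technical obstacle; away from it the argument is routine.

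For property (3) I would localize near a single interior node. Fixing $i$, I choose $v\in\sob{1}{r}$ supported in $(Q_i-\varepsilon,Q_i+\varepsilon)$, with $\varepsilon$ small enough that no other node lies in this interval and with $v(Q_i)\neq 0$. Splitting the integral at $Q_i$ and integrating the $v_x$ term by parts on each of $(Q_i-\varepsilon,Q_i)$ and $(Q_i,Q_i+\varepsilon)$, the boundary terms at $Q_i\pm\varepsilon$ vanish and the bulk integrals cancel by property (1). What survives are the two one-sided boundary contributions at $Q_i$, which combine to
\begin{equation}
  -\frac{1}{r-1}\jump{|\tilde u_x|^{r-2}\tilde u_x}_{Q_i}\,v(Q_i) = P_i\,v(Q_i).
\end{equation}
Since $v(Q_i)\neq 0$ is arbitrary, \eqref{eq:Pdef} follows upon cancelling $v(Q_i)$.

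Finally, for property (4) I would work inside a fixed interval, where by (1) we have the strong form $|\tilde u|^{r-2}\tilde u=\frac{1}{r-1}(|\tilde u_x|^{r-2}\tilde u_x)_x=|\tilde u_x|^{r-2}\tilde u_{xx}$. Multiplying through by $\tilde u_x$ and recognising both sides as exact derivatives gives
\begin{equation}
  \frac{1}{r}\big(|\tilde u|^r\big)_x = |\tilde u|^{r-2}\tilde u\,\tilde u_x = |\tilde u_x|^{r-2}\tilde u_x\,\tilde u_{xx} = \frac{1}{r}\big(|\tilde u_x|^r\big)_x,
\end{equation}
so $|\tilde u|^r-|\tilde u_x|^r$ is constant on each $\Omega_i$, which is \eqref{eq:ci} (for even integer $r$ the absolute values may be dropped, matching the stated form). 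The only caveat is that this chain of identities relies on the $\cont{2}$ regularity established for (1), so that $\tilde u_{xx}$ exists and the products are classical.
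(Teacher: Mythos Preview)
Your proposal is correct and takes essentially the same approach as the paper. The only cosmetic differences are that the paper derives properties (1) and (3) in one stroke by splitting the full integral \eqref{eq:tilde u} over all subintervals and integrating by parts, and for (4) it phrases the same computation via the substitution $w=\tilde u_x$ viewed as a function of $\tilde u$ rather than multiplying through by $\tilde u_x$; your attention to the regularity issue at $\{\tilde u_x=0\}$ is, if anything, more careful than the paper's own treatment.
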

\begin{proof}
  First we note that \eqref{eq:tilde u} is well-posed, by energy
  methods, since it minimises the \response{convex} functional
  \begin{equation}
    \frac{1}{r}\int_{-\infty}^{\infty}|\tilde{u}|^r + \frac{1}{r-1}
    |\tilde{u}_x|^r \diff x - \sum_{i=1}^N P_i\tilde{u}(Q_i).
  \end{equation}
  By Sobolev embedding we can find a continuous
  representative of $\tilde{u}$ in the $\sob{1}{r}$ equivalence class,
  hence property 2 holds. 
  
  To obtain properties 1 and 3, we divide the
  integral in \eqref{eq:tilde u} into intervals and integrate by
  parts separately in each interval,
  \begin{align}
    0 & = \sum_{i=0}^N\int_{Q_i}^{Q_{i+1}}
    |\tilde{u}|^{r-2}\tilde{u}v + |\tilde{u}_x|^{r-2}\tilde{u}_xv_x \diff x
    - \sum_{i=1}^N P_iv(Q_i), \\
    & = \sum_{i=0}^N\int_{Q_i}^{Q_{i+1}}
    \left(|\tilde{u}|^{r-2}\tilde{u} - \left(|\tilde{u}_x|^{r-2}\tilde{u}_x\right)_x\right)v \diff x
    - \sum_{i=1}^N \left(\jump{|u_x|^{r-2}u_x}_{Q_i}+P_i\right)v(Q_i),
  \end{align}
  and the results follow from standard arguments (using e.g. a
  shooting argument to ensure existence of second derivatives of
  $\tilde{u}$).  
  
    To obtain property 4, make the substitution $w=u_x$,
     to obtain
  \begin{equation}
    |u|^{r-2}u = \frac{1}{r-1} \left(|w|^{r-2}w\right)_x
    = \frac{1}{r-1}\frac{d}{du}\left(|w|^{r-2}w\right)\frac{du}{dx}
    = |w|^{r-2}\frac{dw}{du},
\end{equation}
which we may integrate to get \eqref{eq:ci}.
\end{proof}
The $C_i$ can be determined by integrating \eqref{eq:ci}, after careful
consideration of the sign of $u_x$. This is discussed further in
Appendix \ref{app:Ci}.

We now define a Hamiltonian system
for $P$ and $Q$.
\begin{definition}[Hamiltonian system]
  Given $Q,P$, we define the Hamiltonian
  $\hat{H}(P,Q)=H(\tilde{u}(\cdot;P,Q))$, i.e., we compute
  $\tilde{u}(x,t;Q,P)$ and substitute into \eqref{rCH-Ham}.
\end{definition}
Hamilton's equations with this Hamiltonian have a very simple form.
\begin{proposition}
  Hamilton's canonical equations with this Hamiltonian may be equivalently written as 
  \begin{align}  
 \label{eq:pfdQ}
    \dot{Q}_i & = \tilde{u}_i(Q_i, t; P,Q), \\ \label{eq:pfdP}
    \dot{P}_i & = {\frac{1}{r}}\jump{|u_x|^r}_{Q_i}, \quad i=1,2,\ldots,N.
  \end{align}
\end{proposition}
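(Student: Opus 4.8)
The plan is to evaluate the two partial derivatives of $\hat H$ entering Hamilton's equations, $\dot Q_i=\partial\hat H/\partial P_i$ and $\dot P_i=-\partial\hat H/\partial Q_i$, working throughout from the defining relation \eqref{eq:tilde u} rather than from any explicit formula for $\tilde u$. First I would record the consequences of \eqref{eq:tilde u} that drive everything. Testing \eqref{eq:tilde u} with $v=\tilde u$ gives $\int |\tilde u|^r+\tfrac1{r-1}|\tilde u_x|^r\diff x=\sum_j P_j\tilde u(Q_j)$, so $\hat H=\tfrac{r-1}{r}\sum_j P_j\tilde u(Q_j)$; equivalently $\tilde u$ minimises the strictly convex functional $E[v;P,Q]=\tfrac1r\int|v|^r+\tfrac1{r-1}|v_x|^r\diff x-\sum_j P_j v(Q_j)$ with $\hat H=-\min_v E$. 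Differentiating \eqref{eq:tilde u} in any parameter produces a \emph{linear} problem for the corresponding sensitivity of $\tilde u$, governed by the symmetric bilinear form $B(a,b):=\int(r-1)|\tilde u|^{r-2}ab+|\tilde u_x|^{r-2}a_xb_x\diff x$; a short computation shows $\partial_{P_i}\hat H=B(\tilde u,\partial_{P_i}\tilde u)$ and likewise for $Q_i$.

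For \eqref{eq:pfdQ} I would differentiate \eqref{eq:tilde u} in $P_i$ (the points $Q_j$ do not move), obtaining $B(\partial_{P_i}\tilde u,v)=v(Q_i)$ for all $v\in\sob{1}{r}$. Since $\sob{1}{r}$ embeds in $C^0$ the right-hand side is a bounded functional, its source is a Dirac (not its derivative), and $\partial_{P_i}\tilde u\in\sob{1}{r}$. Using the symmetry of $B$ and the legitimate choice $v=\tilde u$ gives $\partial_{P_i}\hat H=B(\partial_{P_i}\tilde u,\tilde u)=\tilde u(Q_i)$, which is \eqref{eq:pfdQ}. This step is clean precisely because only point \emph{values} of $\tilde u$, well defined by continuity, enter.

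For \eqref{eq:pfdP} I would write $\hat H=\tfrac{r-1}{r}\sum_k\int_{\Omega_k}|\tilde u|^r+\tfrac1{r-1}|\tilde u_x|^r\diff x$ and differentiate in $Q_i$. The Leibniz boundary terms at $x=Q_i$, once the continuity of $\tilde u$ cancels the $|\tilde u|^r$ contributions, collapse to $-\tfrac1r\jump{|\tilde u_x|^r}_{Q_i}$. The remaining interior contribution is $B(\tilde u,\partial_{Q_i}\tilde u)$; integrating its second term by parts on each $\Omega_k$ and using the interval equation \eqref{eq:helmholtz} to cancel the first term, it reduces to the flux boundary sum $\sum_{j=1}^N\big(F(Q_j^-)-F(Q_j^+)\big)$ with $F:=|\tilde u_x|^{r-2}\tilde u_x\,\partial_{Q_i}\tilde u$. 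I would then show this sum vanishes, leaving $\dot P_i=-\partial_{Q_i}\hat H=\tfrac1r\jump{|\tilde u_x|^r}_{Q_i}$, which is \eqref{eq:pfdP}.

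The hard part is the vanishing of that flux sum, and this is where the non-smoothness bites. Differentiating \eqref{eq:tilde u} in $Q_i$ shows $w:=\partial_{Q_i}\tilde u$ solves $B(w,v)=P_i v_x(Q_i)$, i.e. it carries a $\delta'$ source at $Q_i$; hence $w$ is itself discontinuous there, with $\jump{w}_{Q_i}=-\jump{\tilde u_x}_{Q_i}$ obtained by differentiating the continuity of $\tilde u$ at the moving peak, while $w$ stays continuous at every $Q_j$ with $j\neq i$. Unlike the $P_i$ case, the functional $v\mapsto P_iv_x(Q_i)$ is not defined on $v=\tilde u$, so one cannot simply substitute $v=\tilde u$; the flux sum must instead be evaluated term by term. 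At each $Q_j$ with $j\neq i$, continuity of $w$ together with the jump law \eqref{eq:Pdef} gives a contribution $(r-1)P_j\,w(Q_j)$, whereas the $j=i$ term pairs the jump of $\tilde u_x$ (fixed by \eqref{eq:Pdef}) against the jump of $w$ and must be treated symmetrically. The remaining work is to verify the global cancellation of these contributions, which reflects the structure of the linear sensitivity relation $B(w,v)=P_iv_x(Q_i)$ for smooth $v$ rather than any purely local identity. I expect this delicate bookkeeping of the two interacting singularities at $Q_i$ to be the crux, and I would cross-check the final identity against the explicit peakon case $r=2$, where $w=\tfrac{P_i}{2}\sgn(x-Q_i)e^{-|x-Q_i|}$ makes the cancellation transparent.
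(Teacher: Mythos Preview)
Your treatment of $\dot Q_i$ is correct and is essentially the paper's argument: both are the envelope/adjoint observation that the sensitivity $\partial_{P_i}\tilde u$ drops out because $\tilde u$ is critical for $v\mapsto\sum_jP_jv(Q_j)-l[v]$. The paper packages this via the Legendre representation $\hat H=\sum_jP_j\tilde u(Q_j)-l[\tilde u]$; you use $\hat H=(r-1)l[\tilde u]$ and the linearised relation $B(\partial_{P_i}\tilde u,v)=v(Q_i)$ directly. Same content.

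For $\dot P_i$ your decomposition is right and your Leibniz term $-\tfrac1r\jump{|\tilde u_x|^r}_{Q_i}$ is correct. The gap is in your sensitivity relation for $w:=\partial_{Q_i}\tilde u$. You write $B(w,v)=P_iv_x(Q_i)$, obtained by differentiating \eqref{eq:tilde u} naively. But the left-hand side of \eqref{eq:tilde u} contains $|\tilde u_x|^{r-2}\tilde u_x$, which has a jump at $x=Q_i$ that \emph{moves} with $Q_i$; differentiating the integral piecewise produces a Leibniz contribution $-\tfrac1{r-1}\jump{|\tilde u_x|^{r-2}\tilde u_x}_{Q_i}\,v_x(Q_i)=P_iv_x(Q_i)$ on the left, which cancels the $P_iv_x(Q_i)$ from the right. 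The correct identity is therefore
\[
B(w,v)_{\mathrm{piecewise}}\;:=\;\sum_k\int_{\Omega_k}(r-1)|\tilde u|^{r-2}wv+|\tilde u_x|^{r-2}w_xv_x\,\diff x\;=\;0
\qquad\text{for every continuous }v.
\]
Since $\tilde u$ is continuous you may take $v=\tilde u$, and by the symmetry of $B$ this gives $B(\tilde u,w)_{\mathrm{piecewise}}=0$, which is exactly your flux sum. So the ``delicate bookkeeping of the two interacting singularities'' that you anticipate simply evaporates: with the right sensitivity relation the flux sum vanishes in one line, and $\dot P_i=-\partial_{Q_i}\hat H=\tfrac1r\jump{|\tilde u_x|^r}_{Q_i}$ follows immediately. (Equivalently, one can check directly that $\jump{|\tilde u_x|^{r-2}w_x}_{Q_j}=0$ at every $Q_j$, by differentiating \eqref{eq:Pdef} and using that $(|\tilde u_x|^{r-2}\tilde u_x)_x=(r-1)|\tilde u|^{r-2}\tilde u$ is continuous.)

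The paper's own argument for $\dot P_i$ uses the Legendre representation and asserts the analogous cancellation, also glossing over the discontinuity of $\partial_{Q_i}\tilde u$ at $Q_i$ (and with a few evident typos in factors and signs). Your route, once the sensitivity relation is corrected as above, is in fact the cleaner of the two.
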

In fact, this dynamics is fully realisable as a finite dimensional system,
since we have
\begin{equation}
  \jump{u_x^r}_{Q_i} = \left(\widehat{u}_i^r - C_{i+1} -
  \left(\widehat{u}_i^r - C_{i}\right)\right) = (C_{i+1} -
  C_{i}),
\end{equation}
where $C_i$, $i=1,\ldots,N$ can be obtained numerically using the formulae
in Appendix \ref{app:Ci}.
\begin{proof}
  First, note that we may write
  \begin{equation}
    \hat{H}(P,Q) = \sum_{i=1}^N P_i\tilde{u}(Q_i,t;Q,P)
    - \int_{-\infty}^{\infty} |\tilde{u}|^r + |\tilde{u}_x|^r\diff x.
  \end{equation}
  To see this, note that
  \begin{equation}
    \sum_{i=1}^N P_i\tilde{u}(Q_i,t;Q,P)
    = \int_{-\infty}^{\infty} |\tilde{u}|^{r-2}\tilde{u}^2 + |\tilde{u}_x|^{r-2}\tilde{u}_x^2 \diff x,
  \end{equation}
  after taking $v=\tilde{u}(\cdot, t; P,Q)$ in \eqref{eq:tilde u}.
  Then,
  \begin{align}
    \dot{Q}_i = \pp{\hat{H}}{P_i} & =
    \tilde{u}(Q_i, t; P, Q)
    + \sum_{j=1}^N P_j\pp{\tilde{u}}{P_i}(Q_j, t; P, Q)
    - \sum_{j=0}^N \int_{Q_j}^{Q_{j+1}}
    |\tilde{u}|^{r-2}\tilde{u}\pp{\tilde{u}}{P_i}
    + |\tilde{u}_x|^{r-2}\tilde{u}_x\pp{}{x}\pp{\tilde{u}}{P_i}\diff x, \\
     & =
    \tilde{u}(Q_i, t; P, Q)
    + \sum_{j=1}^N
    \underbrace{\left(P_j + \jump{|\tilde{u}|^{r-2}\tilde{u}}_{Q_j}\right)}_{=0}\pp{\tilde{u}}{P_i}(Q_j, t; P, Q) \nonumber \\
    & \qquad
    - \sum_{j=0}^N \int_{Q_j}^{Q_{j+1}}
    \underbrace{\left(|\tilde{u}|^{r-2} \tilde{u}
    - \left(|\tilde{u}_x|^{r-2}\tilde{u}_x\right)_{x}\right)}_{=0}
    \pp{\tilde{u}}{P_i}\diff x, \\
     & =
    \tilde{u}(Q_i, t; P, Q),    
  \end{align}
  as required. Further,
  \begin{align} \nonumber
    \dot{P}_i = -\pp{\hat{H}}{Q_i} = &
    - \sum_{j=1}^N P_j\pp{\tilde{u}}{Q_i}(Q_j, t; P, Q)
    + \sum_{j=0}^N \int_{Q_j}^{Q_{j+1}}
    |\tilde{u}|^{r-2}\tilde{u}\pp{\tilde{u}}{Q_i} 
    + |\tilde{u}_x|^{r-2}\tilde{u}_x\pp{}{x}\pp{\tilde{u}}{Q_i}\diff x \\
    & \qquad + \frac{1}{r}\left(|\tilde{u}|^r+\frac{1}{r-1}|\tilde{u}_x|^r
    \right)|_{Q_i^+}
    -\frac{1}{r}\left(|\tilde{u}|^r+\frac{1}{r-1}|\tilde{u}_x|^r
    \right)|_{Q_i^-} \\
    =
     & 
    -\sum_{j=1}^N
    \underbrace{\left(P_j + \jump{|\tilde{u}|^{r-2}\tilde{u}}_{Q_j}\right)}_{=0}\pp{\tilde{u}}{P_i}(Q_j, t; P, Q)
    + \sum_{j=0}^N \int_{Q_j}^{Q_{j+1}}
    \underbrace{\left(|\tilde{u}|^{r-2} \tilde{u}
    - \left(|\tilde{u}_x|^{r-2}\tilde{u}_x\right)_{x}\right)}_{=0}
    \pp{\tilde{u}}{Q_i}\diff x \\
    & \qquad + \frac{1}{r}\jump{|u_x|^{r}}_{Q_i},
  \end{align}
  as required.
\end{proof}
\begin{remark}
  A consequence of these equations is that the sign of $P_i$ is
  preserved, for $i=1,\ldots,N$. To see this, note that if $P_i=0$,
  then $\jump{u_x^{r-1}}|_{Q_i}=0$, which means that
  $r\dot{P}_i=\jump{u_x^r}|_{Q_i}=0$. That is, if $P_i=0$ then it remains
  zero for all times by continuity.
\end{remark}
The following theorem explains the connection between the singular
solutions and the r-Camassa-Holm equation.
\begin{theorem}
  Let $u=\tilde{u}(x,t;P,Q)$ satisfy the singular solution dynamics above. Then
  $u$ is a weak solution of Equation \eqref{eq:fi}.
\end{theorem}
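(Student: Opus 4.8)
The plan is to substitute $u=\tilde u(x,t;P,Q)$ directly into the weak integrated form and show that the pairing against every $\phi\in\sob{1}{r}$ vanishes. The organising observation is that the antiderivative quantity $\Psi:=\psi-\tfrac{1}{r-1}|u_x|^{r-2}u_x$ satisfies $\Psi_x=|u|^{r-2}u-\tfrac{1}{r-1}(|u_x|^{r-2}u_x)_x$, which by properties 1 and 3 (equations \eqref{eq:helmholtz} and \eqref{eq:Pdef}, equivalently the defining relation \eqref{eq:tilde u}) is exactly the singular momentum $\sum_{i=1}^N P_i\delta(x-Q_i)$. Integrating from $-\infty$ and using the far-field condition gives the explicit step-function form
\begin{equation}
  \Psi(x,t) = \sum_{i=1}^N P_i(t)\,H(x-Q_i(t)),
\end{equation}
where $H$ is the Heaviside function. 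Since $\phi\in\sob{1}{r}$ has a continuous representative in one dimension, pairing the Dirac masses below with $\phi$ is well defined, so all terms involving $\Psi_t$ may be interpreted distributionally.

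First I would differentiate this identity in time, obtaining a Heaviside piece $\sum_i\dot P_i\,H(x-Q_i)$ from the varying momenta and a Dirac piece $-\sum_i P_i\dot Q_i\,\delta(x-Q_i)$ from the moving breakpoints, and then insert the Hamiltonian dynamics \eqref{eq:pfdQ}--\eqref{eq:pfdP}. Next I would integrate the $\phi_x$-term by parts separately on each interval $\Omega_i$, where $\tilde u$ is smooth but its derivative jumps. This generates interior boundary terms $\tfrac{1}{r-1}\jump{|u_x|^{r-2}u_x u}_{Q_i}\phi(Q_i)$; using continuity of $\tilde u$ (property 2) and the jump relation \eqref{eq:Pdef} these equal $-P_i\tilde u(Q_i)\phi(Q_i)$, and I would check that they cancel exactly against the Dirac contribution $-\sum_i P_i\dot Q_i\phi(Q_i)$ once $\dot Q_i=\tilde u(Q_i)$ is used. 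This disposes of all singular contributions and leaves only integrals of functions that are smooth inside each $\Omega_i$.

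For the remaining regular terms I would use property 1 in the form $(|u_x|^{r-2}u_x)_x=(r-1)|u|^{r-2}u$ to rewrite $(|u_x|^{r-2}u_x u)_x=(r-1)|u|^r+|u_x|^r$ inside each interval. Collecting the coefficients of $|u|^r$ and $|u_x|^r$ then collapses the regular source to $\tfrac1r(|u|^r-|u_x|^r)$, which by property 4 (equation \eqref{eq:ci}) is the constant $\tfrac1r C_i$ on $\Omega_i$. Finally, the surviving Heaviside part of $\Psi_t$, namely $\tfrac1r\sum_i\jump{|u_x|^r}_{Q_i}H(x-Q_i)$, is evaluated on $\Omega_i$ using that $\jump{|u_x|^r}_{Q_j}$ equals the difference of the two interval constants across $Q_j$; this telescopes—thanks to the vanishing of the constant on the two unbounded end intervals—to exactly minus the regular source on each $\Omega_i$. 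Hence the integrand vanishes almost everywhere and the weak form holds for every $\phi$.

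I expect the main obstacle to be the careful bookkeeping of the distributional time derivative of the discontinuous quantity $\Psi$: one must cleanly separate its Heaviside-weighted contribution (driven by $\dot P_i$) from its Dirac-weighted contribution (driven by the moving breakpoints $\dot Q_i$), and then match each against the right piece of the integration by parts—the Dirac part against the interior boundary jumps, and the Heaviside part against the interval constants $C_i$. The fact that both matchings close up exactly is precisely what forces the simple Hamiltonian dynamics \eqref{eq:pfdQ}--\eqref{eq:pfdP}, so verifying the telescoping of the $C_i$ through the far-field conditions is the crux; everything else is routine once the representation $\Psi=\sum_i P_i H(x-Q_i)$ is recognised.
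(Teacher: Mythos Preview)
Your strategy is correct and is essentially a reorganisation of the paper's proof. The paper does not write $\Psi=\sum_i P_i H(x-Q_i)$ explicitly; instead it introduces an antiderivative $\Phi$ of $\phi$, integrates by parts once to recognise $\int\phi\,\Psi\,\diff x=-\sum_i P_i\Phi(Q_i)$ via \eqref{eq:tilde u}, and then differentiates in time. After inserting \eqref{eq:pfdQ}--\eqref{eq:pfdP}, the paper converts the jump terms back to interval integrals and uses $(|u_x|^r)_x=(|u|^r)_x$ (an immediate consequence of property~4) to perform a single global integration by parts of $|u|^r$ against $\Phi$, thereby avoiding any explicit bookkeeping of the constants $C_i$. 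Your route---explicit Heaviside representation of $\Psi$, explicit telescoping of the $C_i$ with $C_0=C_N=0$---reaches the same cancellations, and is arguably more transparent about why the constants disappear; the paper's $\Phi$ trick is slightly slicker in that it never names the $C_i$.

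One small slip to fix when you execute the argument: the interior boundary terms from integrating $\tfrac{1}{r-1}|u_x|^{r-2}u_x\,u\,\phi_x$ by parts on each $\Omega_i$ come with a minus sign, namely $-\tfrac{1}{r-1}\jump{|u_x|^{r-2}u_x}_{Q_i}\,u(Q_i)\,\phi(Q_i)=+P_i\,u(Q_i)\,\phi(Q_i)$, which then cancels the Dirac contribution $-P_i\dot Q_i\,\phi(Q_i)$ from $\Psi_t$. As written, your two displayed quantities have the same sign and would add rather than cancel; correcting the boundary sign restores the intended cancellation and the rest of your outline goes through unchanged.
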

\begin{proof}
  Let $\phi(x)$ be a $C^\infty$ test function that vanishes at infinity,
  and let $\Phi$ be such that $\Phi_x=\phi$.
  Then
  \begin{align}
    \begin{split}
    \dd{}{t}\int_{-\infty}^{\infty}
    \phi\left(\psi - \frac{1}{r-1}|u_x|^{r-2}u_x\right)\diff x & =
    \dd{}{t}\int_{-\infty}^{\infty}
    \Phi_x\left(\psi - \frac{1}{r-1}|u_x|^{r-2}u_x\right)\diff x, \\
[\mbox{integration by parts}]    & =
    -\dd{}{t}\int_{-\infty}^{\infty}
    \Phi |u|^{r-1}u + \frac{1}{r-1} \Phi_x |u_x|^{r-2}u_x\diff x, \\
    & = -\dd{}{t}\sum_{i=1}^N P_i\Phi(Q_i), \\
    & = -\sum_{i=1}^N\left(\dot{P}_i\Phi(Q_i) + P_i\phi(Q_i)u(Q_i)\right), \\
    & = -\sum_{i=1}^N\left(\frac{1}{r}\jump{|u_x|^r}_{Q_i}\Phi(Q_i)
    - \frac{1}{r-1}\jump{|u_x|^{r-2}u_x}_{Q_i}\phi(Q_i)u(Q_i)\right), \\
    & = -\sum_{i=1}^N\left(\frac{1}{r}\jump{|u_x|^r\Phi}_{Q_i}
    - \frac{1}{r-1}\jump{|u_x|^{r-2}u_x\phi u}_{Q_i}\right), \\
    & = -\sum_{i=0}^N\int_{Q_i}^{Q_{i+1}}\frac{-1}{r}(|u_x|^r\Phi)_x
    + \frac{1}{r-1}(|u_x|^{r-2}u_x\phi u)_x\diff x, \\
    & = -\sum_{i=0}^N\int_{Q_i}^{Q_{i+1}}\frac{-1}{r}\left((|u_x|^r)_x\Phi +
    u_x^r\phi\right) \\
    & \qquad \qquad
    + \frac{1}{r-1}\left((|u_x|^{r-2}u_x)_x \phi u + |u_x|^r\phi +
    |u_x|^{r-2}u_x\phi_xu\right) \diff x.
    \end{split}
    \end{align}
  Restricted to the interval $\Omega_i$, we have
  $|u|^r = |u_x|^r + C_i$, and so $(|u_x|^r)_x = (|u|^r)_x$.
  We also have $|u|^{r-2}u=\frac{1}{r-1}(|u_x|^{r-2}u_x)_x$.
  Hence,
    \begin{align}
    \begin{split}
    \dd{}{t}\int_{-\infty}^{\infty}
    \phi\left(\psi - \frac{1}{r-1}|u_x|^{r-2}u_x\right)\diff x & =
    -\sum_{i=0}^N\int_{Q_i}^{Q_{i+1}}\frac{-1}{r}(|u|^r)_x\Phi - \frac{1}{r}
    |u_x|^r\phi \\
    & \qquad + \frac{1}{r-1}
    \left((|u_x|^{r-2}u_x)_x \phi u + |u_x|^r\phi + |u_x|^{r-2}u_x\phi_xu\right) \diff x, \\
    & \label{eq:parts} =
    -\sum_{i=0}^N\int_{Q_i}^{Q_{i+1}} \frac{1}{r}|u|^r\phi - \frac{1}{r}
    |u_x|^r\phi \\
    & \qquad 
    + \frac{1}{r-1}
    \left(\underbrace{(|u_x|^{r-2}u_x)_x}_{=(r-1)|u|^{r-2}u} \phi u
    + |u_x|^r\phi + |u_x|^{r-2}u_x\phi_xu\right) \diff x, \\
    & = -\int_{-\infty}^{\infty}\frac{r+1}{r}|u|^r\phi + \frac{1}{r(r-1)}
    |u_x|^r\phi + \frac{1}{r-1}|u_x|^{r-2}u_x\phi_xu \diff x,
    \end{split}
    \end{align}
    where a global integration by parts in the first term in line
    \eqref{eq:parts} was possible since $\Phi$ and $u$ are both
    globally continuous.  Finally we get
    \begin{equation}
      \int_{-\infty}^{\infty}
      \phi\left(\psi - \frac{1}{r-1}|u_x|^{r-2}u_x\right)_t +
      \frac{r+1}{r}|u|^r\phi + \frac{1}{r(r-1)}
    |u_x|^r\phi + \frac{1}{r-1}|u_x|^{r-2}u_x\phi_xu \diff x = 0,
    \end{equation}
    for all test functions $\phi$, and we obtain the result
    by passing to the limit $\phi \to \sob{1}{r}$.
\end{proof}

\section{Examples}
\label{sec:solns}

This section discusses the results of numerical simulations of 2-point
collisions both overtaking and antisymmetric (head-on) and 3-point
overtaking collisions.

\subsection{1 point solution}
In the case $N=1$, the vanishing boundary conditions at $\pm\infty$
mean that $C_0=C_1=0$, and we have $u^r - (r-1)u_x^r = 0$ in both intervals,
which has the solution $u=\widehat{u}_1\exp(-|x-Q_1|)$. Then,
\begin{equation}
  P_1 = -\jump{u_x^{r-1}}|_{x=Q_1}=2\widehat{u}_1^{r-1},
\end{equation}
and
\begin{equation}
  \dot{P}_1 = \frac{1}{r}\jump{u_x^r}_{Q_1} = 0,
\end{equation}
leading to the \emph{travelling wave solution}
\begin{equation}
  u(x,t) = \widehat{u}_1\exp(-|x-Q_1|), \quad Q_1(t) = Q_1(0)+\widehat{u}_1t.
\label{TW_solution}
\end{equation}
The peaked exponential shape of the 1 point solution will re-emerge whenever the multi-point solutions are well separated
compared to the width of the exponential. 

\begin{figure}
    \includegraphics[width=8cm]{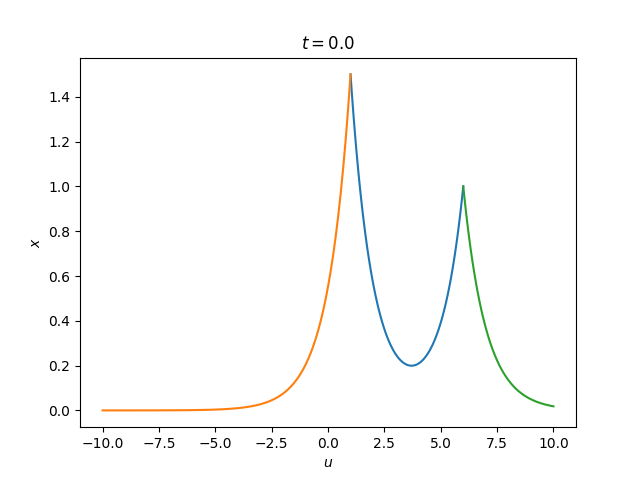}
    \includegraphics[width=8cm]{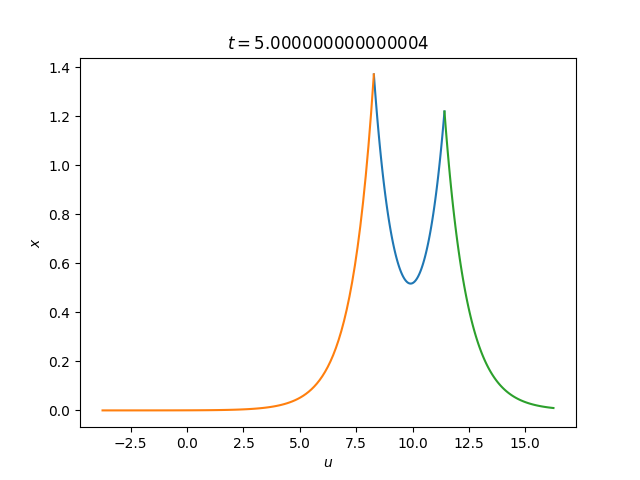}\\
    \includegraphics[width=8cm]{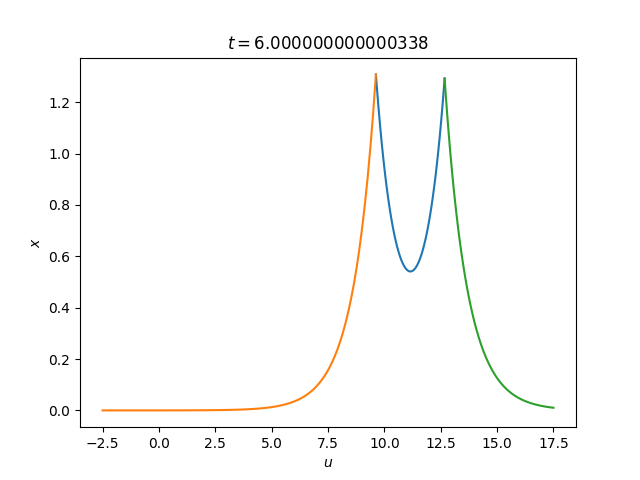}
    \includegraphics[width=8cm]{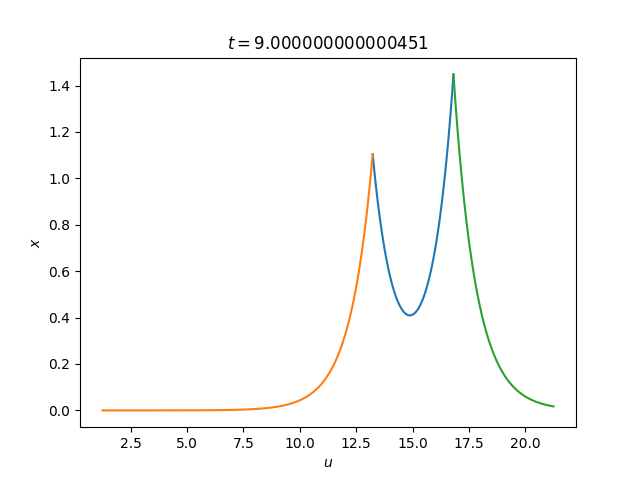}
\caption{Plots of $u(x)$ from the overtaking collision for $r=2$ at various times. Top-left: $t=0$. Top-right: $t=3$. Bottom-left: $t=5$. Bottom-right: $t=9$. The peakon on the left has a higher velocity and so it catches up with the peakon on the right, transferring momentum through nonlocal interactions until the peakon on the right is moving faster and they separate again.}.
    \label{fig:overtaking r2}
\end{figure}

\begin{figure}
    \centering
    \includegraphics[width=8cm]{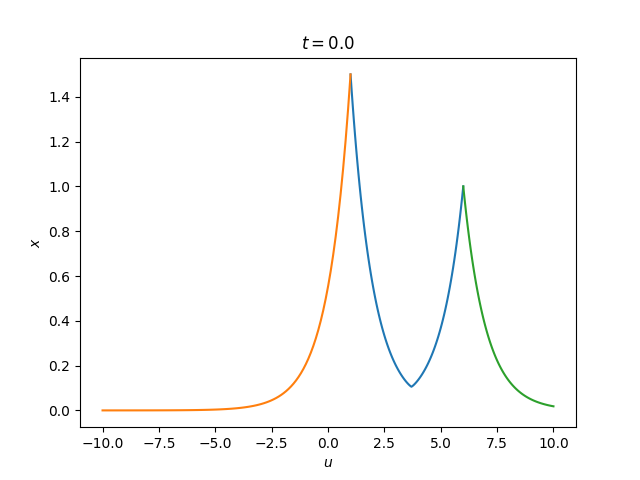}
    \includegraphics[width=8cm]{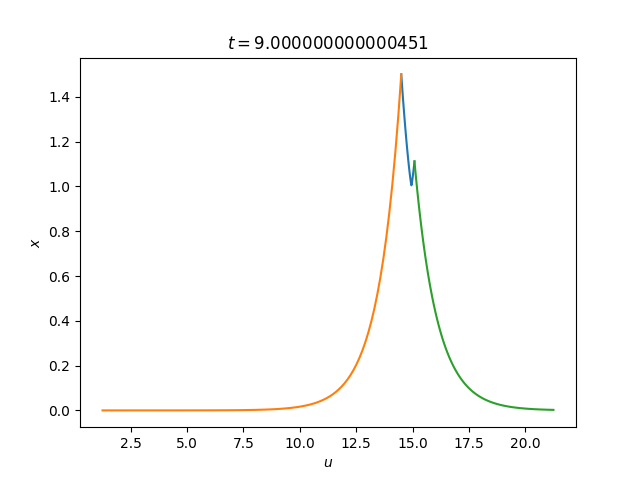}\\
    \includegraphics[width=8cm]{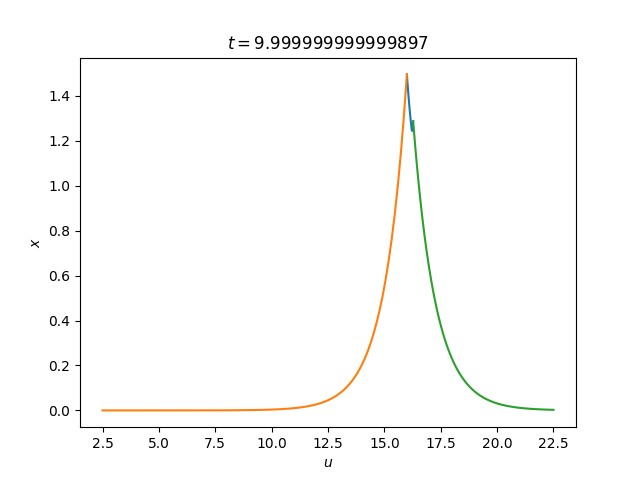}
    \includegraphics[width=8cm]{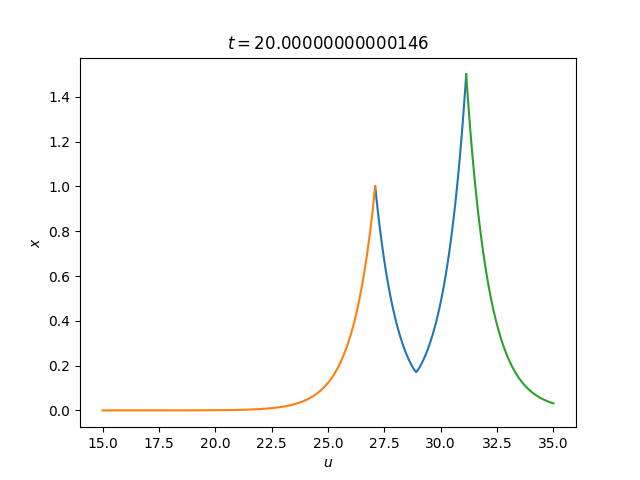}
\caption{Plots of $u(x)$ from the overtaking collision for $r=6$ at various times. Top-left: $t=0$. Top-right: $t=9$. Bottom-left: $t=10$. Bottom-right: $t=20$. The main observable differences are that there is a much sharper minimum between the two peaks for $r=6$ than for $r=2$ in the previous figure, and the two peaks get much closer to each other during the collision for $r=6$ than for $r=2$.}.
    \label{fig:overtaking r6}
\end{figure}

\begin{figure}
    \centering
    \includegraphics[width=8cm]{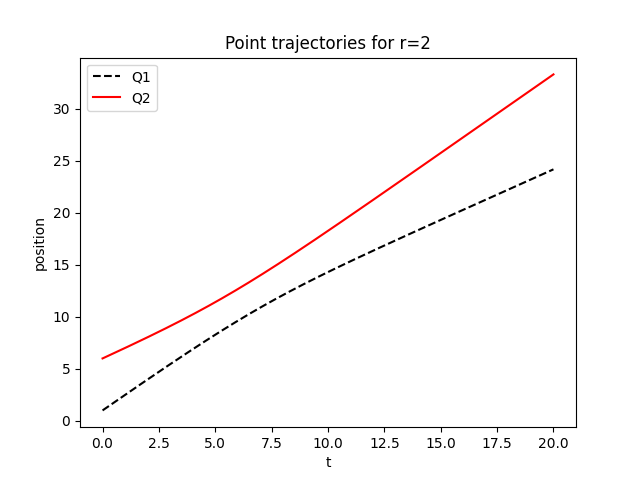}
    \includegraphics[width=8cm]{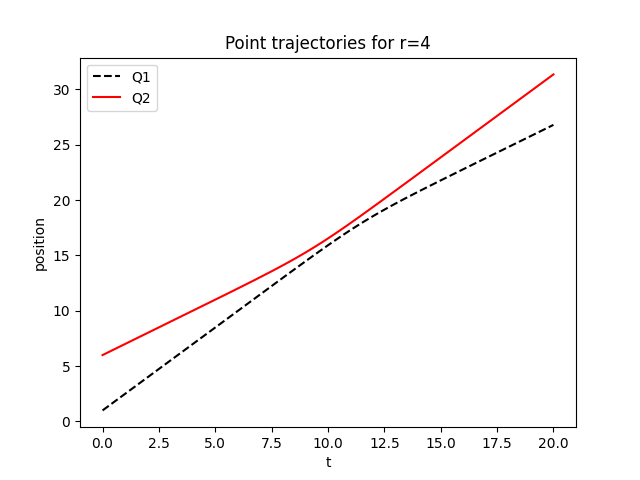}
\includegraphics[width=8cm]{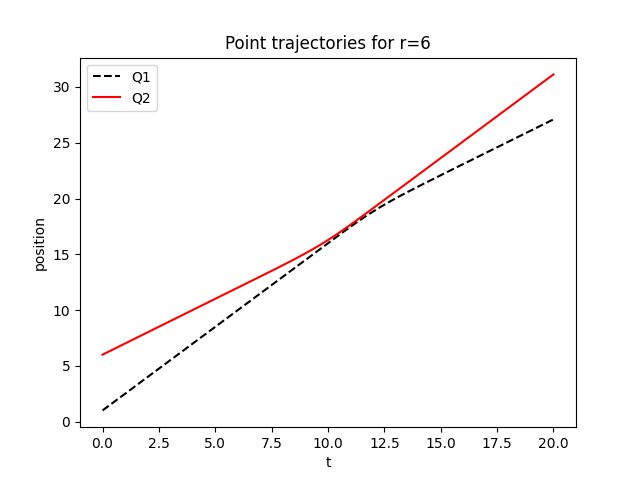}
\caption{Plots of $Q_1$ and $Q_2$ versus time for various $r$: Top-left: $r=2$. Top-right: $r=4$. Bottom: $r=6$. For larger $r$, we see that the peaks are behaving much more like billiard balls, with the elastic interaction being much more local and therefore producing much less phase shift. This is because in the $\sob{1}{r}$ norm, a smaller reduction in the larger peak is required to balance the same increase in the smaller peak.}
    \label{fig:rear}
\end{figure}

\begin{figure}
    \centering
    \includegraphics[width=8cm]{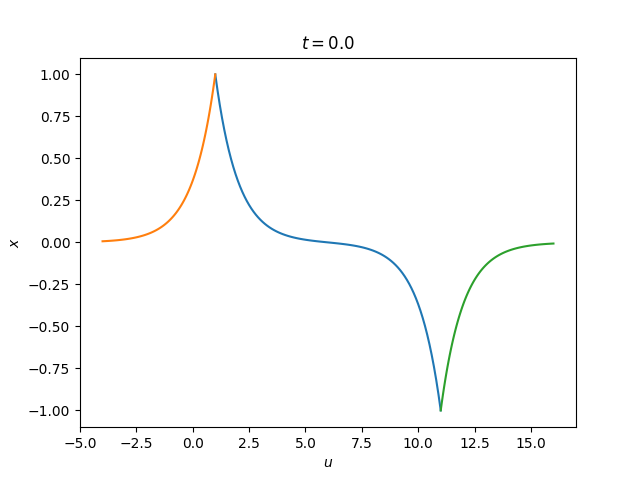}
    \includegraphics[width=8cm]{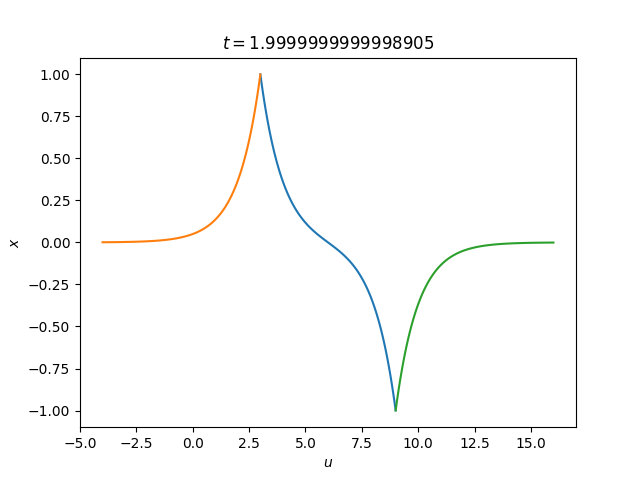}\\
    \includegraphics[width=8cm]{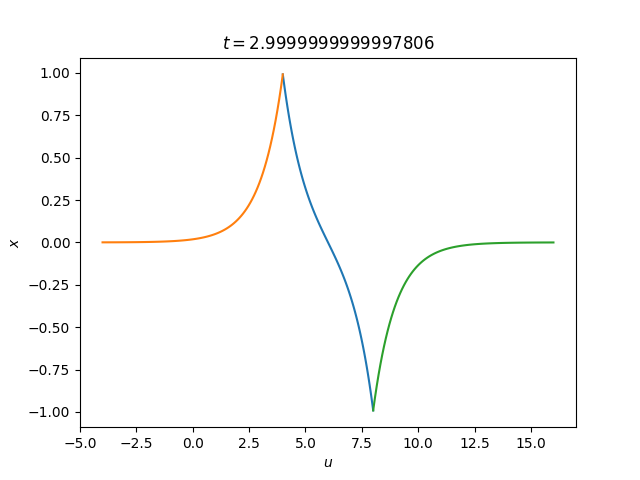}
    \includegraphics[width=8cm]{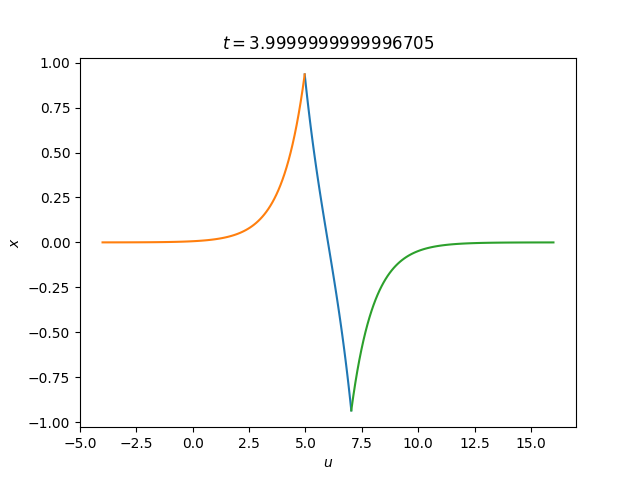}
\caption{Plots of $u(x)$ from the antisymmetric collision for $r=2$ at various times. Top-left: $t=0$. Top-right: $t=2$. Bottom-left: $t=3$. Bottom-right: $t=4$. As the peaks get closer together, the profile between the peaks approaches a straight line, with the derivative becoming more and more negative. This causes $P_1$ and $P_2$ to get larger, preventing the two peaks from cancelling each other out and leading to a collision in finite time.}.
    \label{fig:head r2}
\end{figure}

\begin{figure}
    \centering
    \includegraphics[width=12cm]{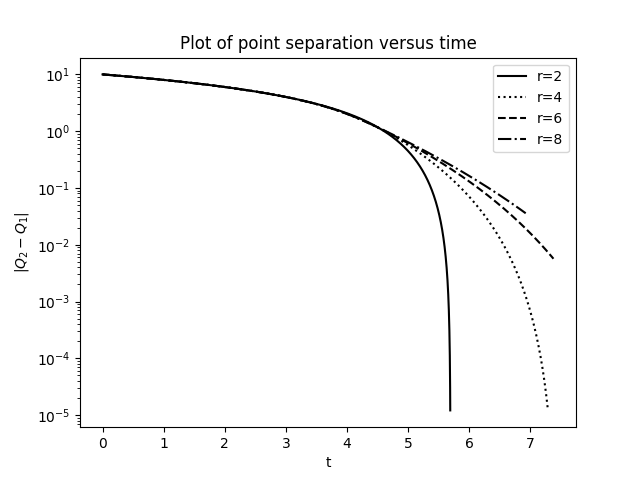}
\caption{Antisymmetric collision paths for various values of $r$ ranging from  from $r=2$ to $r=8$. We see that the collision takes the shortest time for $r=2$, and the collision time is increasing as $r$ increases from 2 to 8. This behaviour is consistent with the hypothesis that the collision time tends to infinity as $r\to \infty$.}
    \label{fig:head r}
\end{figure}


\subsection{2 point solutions}
Now we \response{solve the} $N=2$ case. It is necessary to use
numerical methods to solve the equation for the singular solution
profile in $[Q_1,Q_2]$, either by solving the quadratures in Appendix
\ref{app:Ci}, or by using a direct numerical discretisation of
\eqref{eq:helmholtz}.  In this section we compute peakon solutions by
solving \eqref{eq:helmholtz} numerically in each interval
$[Q_i,Q_{i+1}]$, using a finite element discretisation, implemented
using Firedrake \citep{rathgeber2016firedrake}.

For two point solutions, the range $[Q_1,Q_2]$ is transformed to the
unit interval $[0,1]$, to facilitate symbolic differentiation to
compute the Hamiltonian vector field. Then we can write
\begin{equation}
L[u,Q] = \hat{L}[\hat{u},Q] = \frac{1}{r}\Delta Q\int_0^1 \hat{u}^r + \frac{1}{(r-1)\Delta Q^r}\hat{u}_s^r
\diff s + \frac{1}{r(r-1)}(\hat{u}(0))^r
+ \frac{1}{r(r-1)}(\hat{u}(1))^r,
\end{equation}
where $\Delta Q=Q_2-Q_1$, and $u(Q_1 + s\Delta Q)=\hat{u}(s)$,
having made use of the identities,
\begin{equation}
u(x) = \left\{
\begin{array}{cc}
\hat{u}(1)e^{Q_2-x}     & x>Q_2 \\
\hat{u}(0)e^{x-Q_1}     & x<Q_1 \\
\end{array}\right. ,
\end{equation}
which leads to
\begin{align}
    \frac{1}{r}\int_{Q_N}^{\infty}
    u^r + \frac{u_x^r}{r-1}\diff x
    & = \underbrace{\frac{1+\frac{1}{r-1}}{r}}_{=\frac{1}{r-1}}
    \hat{u}(1)^r\int_{Q_N}^{\infty}
       e^{r(Q_N-x)}\diff x, \\
       & = -\frac{1}{r(r-1)}(\hat{u}(1))^r
       [e^{r(Q_N-x)}]_{Q_N}^{\infty}
        = \frac{1}{r(r-1)}(\hat{u}(1))^r,
\end{align}
and
\begin{align}
    \frac{1}{r}\int_{-\infty}^{Q_1}
    u^r + \frac{u_x^r}{r-1}\diff x
    & = \underbrace{\frac{1+\frac{1}{r-1}}{r}}_{=\frac{1}{r-1}}
    (\hat{u}(0))^r\int_{-\infty}^{Q_1}
       e^{r(x-Q_1)}\diff x, \\
       & = \frac{1}{r(r-1)}(\hat{u}(0))^r
       [e^{r(x-Q_1)}]_{Q_1}^{\infty}
        = \frac{1}{r(r-1)}(\hat{u}(0))^r.
\end{align}
The Hamiltonian is then $H(P,Q)=S(\tilde{u},P,Q)$, where
\begin{equation}
  S[P,Q,\tilde{u}] = P_1\tilde{u}(0) + P_2\tilde{u}(1) + \hat{L}[\tilde{u},Q],
\end{equation}
and where $\tilde{u}$ is constrained to solve
\begin{equation}
  \label{eq:u constraint}
  \left\langle \frac{\delta S}{\delta \tilde{u}}, \delta \tilde{u}
  \right\rangle=0,
  \quad \forall \delta\tilde{u} \in V.
\end{equation}
For the unapproximated problem $V=\sob{1}{r}([0,1])$, which we
approximate with a conforming finite element space $V_h\subset V$. We
have
\begin{equation}
  \pp{H}{Q_i} = \pp{S}{Q_i} + \underbrace{\left\langle \pp{S}{\tilde{u}}, \pp{\tilde{u}}{Q_i}
  \right\rangle}_{=0}, \quad i=1,2,
\end{equation}
provided that \eqref{eq:u constraint} is satisfied. Similarly we have
$\pp{H}{P_i}=\pp{S}{P_i}$, $i=1,2$. Hence, the Hamiltonian equations
are
\begin{equation}
  \left\langle \pp{S}{\tilde{u}}, v \right\rangle = 0, \forall v\in V_h, \,
    \dot{P}_i = -\pp{S}{Q_i}, \, \dot{Q}_i = \pp{S}{P_i},
\end{equation}
all of which can be obtained by symbolic differentation using Firedrake.

First we consider the ``overtaking" collision, where $Q_1=1$, $Q_2=6$, $u(Q_1)=1.5$, and $u(Q_2)=1$ initially. Some plots are showing in Figure \ref{fig:overtaking r2} and Figure \ref{fig:overtaking r6}, for $r=2$ and $r=6$ respectively. We observe that for larger $r$, the peaks behave more like billiards, with the collision being 
more local (i.e., the peaks get closer before transferring momentum from the peak on the left to the peak on the right). This is made clearer in Figure \ref{fig:rear}, which shows the peak trajectories for $r=2,4,6$. The same momentum transfer occurs in each case, but the peaks get closer before transferring for larger $r$.
This is because for larger $r$, a smaller reduction in the larger peak is required to balance the same increase in the smaller peak in order to preserve the $\sob{1}{r}$ norm.

Next we consider the ``antisymmetric" collision, where $Q_1=1$,
$Q_2=11$, $u(Q_1)=1$, and $u(Q_2)=-1$ initially. The classic collision
is seen in Figure \ref{fig:head r2}. If the peaks were simply
superposed, they would cancel out, but the nonlinear interaction
drives up $P_1$ (and $P_2$ towards $-\infty$), keeping
$u(Q_1)>0>u(Q_2)$, leading to a collision in finite time. Now we
examine how this finite time collision behaves for different $r$ in
Figure \ref{fig:head r}. We see a trend in increasing collision time
as $r$ increases from 4 to 10. This is consistent with the hypothesis
that the collision time should tend to infinity as $r\to
\infty$. Similar results for the r-Hunter-Saxton equation were
observed in \cite{cotter2020r}.

\subsection{3 point solutions}

We finally present a three point solution, with $r=4$, obtained using
the 3 point extension of the same finite element method used to
produce the 2 point solutions. The initial condition is chosen so that
all three points are moving in the same direction, but the 
point farthest to the left is moving at faster speed than the two points on the
right. Plots of the solution at various times are given in Figures
\ref{fig:3pt 1} and \ref{fig:3pt 2}.  It appears that the three points
have clumped together (as we see at $t=25$ in Figure \ref{fig:3pt 1}).
However, in Figure \ref{fig:3pt 2}, we see that the points separate again at
later times. A plot of the trajectories is shown in Figure
\ref{fig:3pt traj}. This plot shows that the ordering of the points is
preserved, as we expect from the fact that the points are transported
by a globally defined velocity $u$ (i.e. they are acted on by the
time-dependent diffeomorphism generated by $u$). One may notice  
that when the points are almost superposed, the peak is
only slightly higher than the left peak in the initial condition. This
is due to the 4th power in the energy as opposed to the 2nd power for CH.

\begin{figure}
    \centering
    \includegraphics[width=8.5cm]{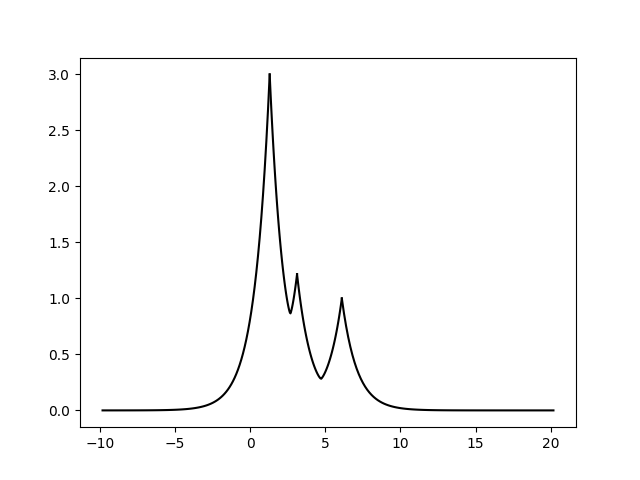}
    \includegraphics[width=8.5cm]{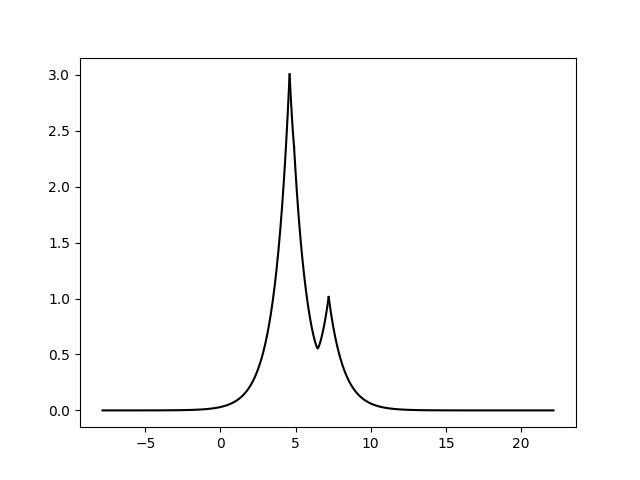}\\
    \includegraphics[width=8.5cm]{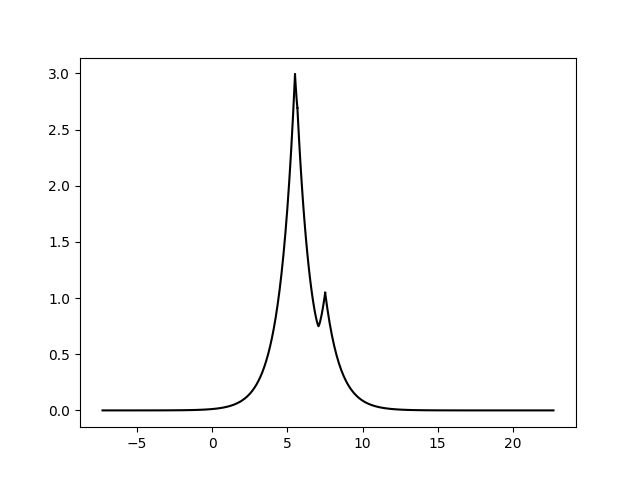}
    \includegraphics[width=8.5cm]{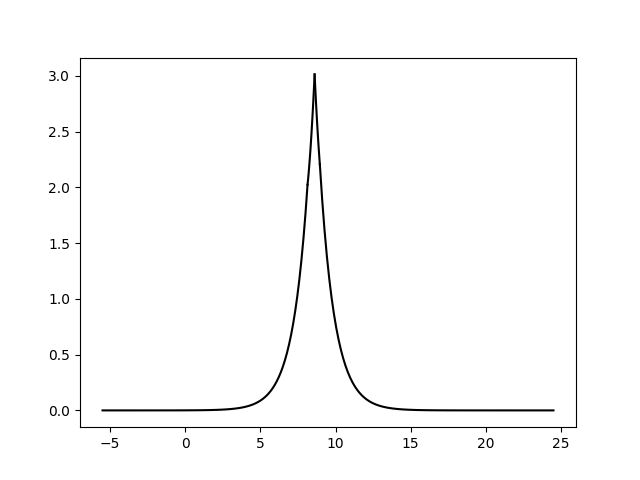}\\
    \caption{\label{fig:3pt 1}Plots of the three point solution with initial
      condition $Q_1=1$, $Q_2=3$, $Q_3=6$, $u_1=3$, $u_2=1.2$, $u_3=1$. From
      left to right, then top to bottom, the solution is plotted at times
      $t=1,12,15$ and 25 respectively. Plots at later times are shown in Figure
    \ref{fig:3pt 2}.}
\end{figure}

\begin{figure}
    \centering
    \includegraphics[width=8.5cm]{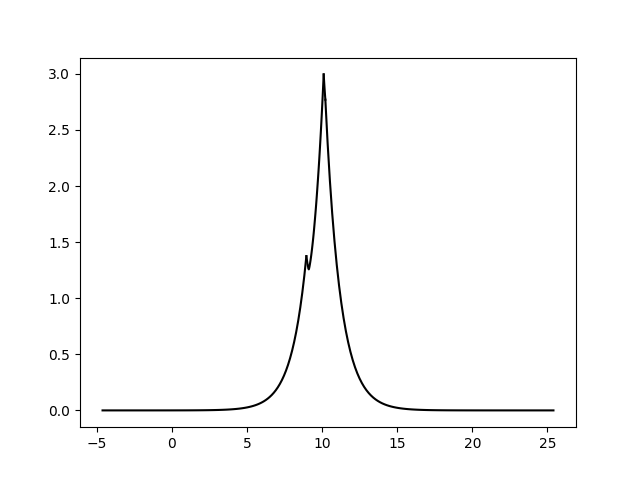}
    \includegraphics[width=8.5cm]{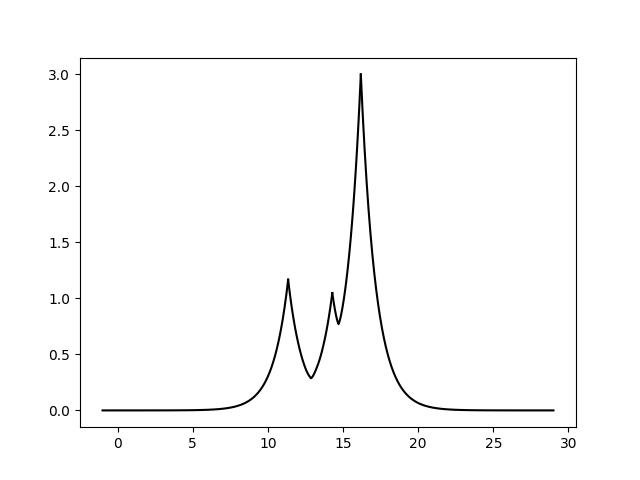}\\
    \includegraphics[width=8.5cm]{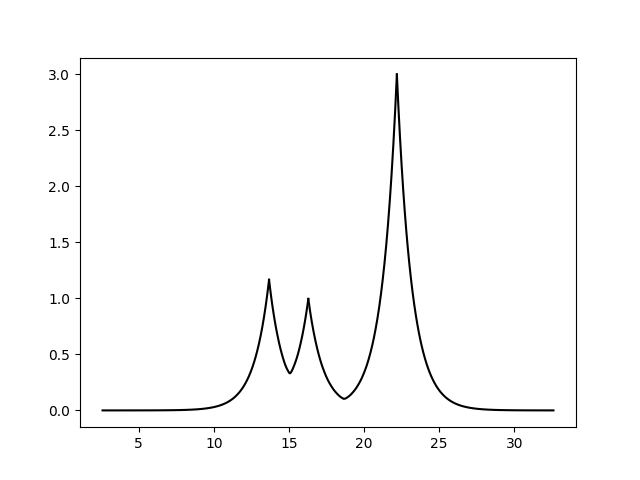}
    \includegraphics[width=8.5cm]{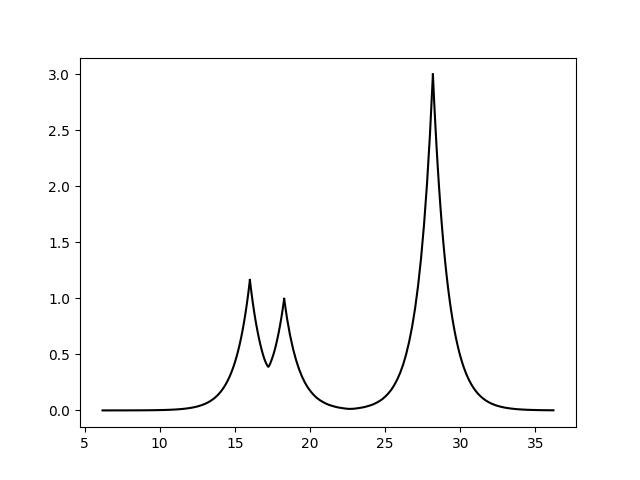}
    \caption{\label{fig:3pt 2}Plots of the three point solution with initial
      condition $Q_1=1$, $Q_2=3$, $Q_3=6$, $u_1=3$, $u_2=1.2$, $u_3=1$. From
      left to right, then top to bottom, the solution is plotted at times
      $t=30,50,70$ and 90 respectively. Earlier times are plotted in
      Figure \ref{fig:3pt 1}. Eventually the gap will widen between the two points on the left.}
\end{figure}

\begin{figure}
    \centering
    \includegraphics[width=12cm]{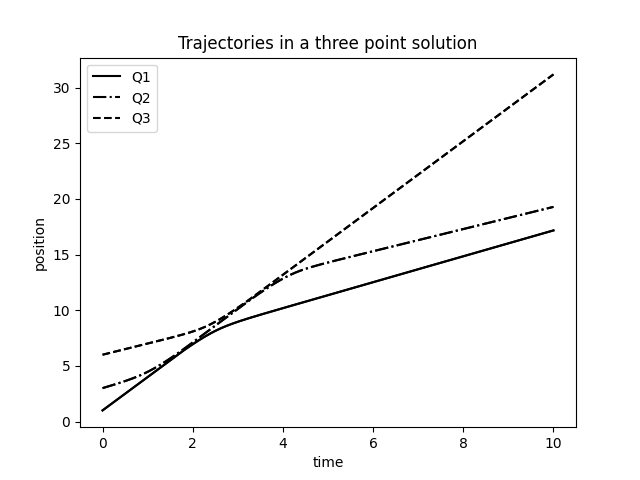}
    \caption{\label{fig:3pt traj}Point trajectories of the three point
      solution with initial condition $Q_1=1$, $Q_2=3$, $Q_3=6$,
      $u_1=3$, $u_2=1.2$, $u_3=1$. We observe that the points pass
      very close to each other but do not cross, so their order does not change.}
\end{figure}

\section{Summary and outlook}
\label{sec:summary}

Our main goal in this paper has been to demonstrate the existence of
singular weak solutions of the r-CH equation and numerically simulate
the coherence of their nonlinear interactions in overtaking collisions
and head-on collisions. Of course, many questions remain open about
the r-CH solutions. For example, we have not studied their stability,
or even the stability of their $N=1$ travelling wave. However, the
numerical simulations indicate that these solutions are likely to be
quite stable. The outcome of their head-on collisions raises some
questions, though, since wave breaking (that is, the formation of a
vertical derivative of velocity) was not seen in the simulations for
$r>2$. The absence of wave breaking raises the question of whether
these singular weak solutions would emerge from smooth initial
conditions in finite time. However, wave breaking may not be relevant
to the formation of the $N$-point solutions. In fact, wave breaking
may not be relevant to the creation of CH peakons for $r=2$,
either. See (\cite{bendall2021perspectives}) for a discussion of other
mechanisms of creation of CH peakons. We plan to investigate the
emergence of singular solutions from smooth initial conditions by
solving Equation \eqref{eq:fi} using a finite element method in future
work.  The present work also ignores any analytical questions of local
well-posedness (existence, uniqueness and continuous dependence on
initial conditions) for the r-CH solutions arising from smooth initial
conditions. Finally, it is interesting to ask how the singular solutions
produce solutions of the integrated equation \eqref{eq:fi} even though
they are insufficiently regular to solve the r-CH equation that emerges
from Hamilton's principle. We \response{hypothesise} that this is because the singular
solutions are solutions to the optimal control problem
\begin{equation}
  \min_{u,Q} \int_0^T \|u\|_{\sob{1}{r}}\diff t,
\end{equation}
subject to the constraints
\begin{equation}
  \dot{Q}_i = u(Q_i,t),  \quad
  Q_i(0) = a_i, \, Q_(T) = b_i, \quad i=1,\ldots,N.
\end{equation}
We could then find a minimising sequence of smooth solutions of the
r-CH equation that satisfy these constraints, recovering the
singular solutions in the limit. Since the limit only ensures a solution in $\sob{1}{r}$, we would lose the property of solving
the r-CH equation, but solving the \response{integrated} equation \eqref{eq:fi} is still possible (since all of the smooth
solutions also solve it).
All these questions and crowds of
other related questions that may come easily to mind will be left open
for future work.

\subsection*{Acknowledgements}
We would like to thank our friends and colleagues who have generously
offered their attention, thoughts and encouragement in the course of
this work during the time of COVID-19. We thank Jonathan Mestel for
useful discussions about Equation \eqref{eq:helmholtz}, which
kickstarted this work. CJC is grateful for partial support from EPSRC
(EP/W015439/1, EP/W016125/1, EP/R029423/1, EP/R029628/1, EP/L016613/1)
and NERC (NE/R008795/1). DH is grateful for partial support from ERC
Synergy Grant 856408 - STUOD (Stochastic Transport in Upper Ocean
Dynamics). TP is grateful for partial support
from EPSRC (EP/X017206/1, EP/X030067/1 and EP/W026899/1) and the
Leverhulme Trust (RPG-2021-238).

\appendix

\section{Solving for the characteristic constants $C_i$}
\label{app:Ci}

The sign of $C_i$ determines the characteristic behaviour of the
solution. If $C_i>0$, then $u$ cannot have a zero. Conversely, for
$C_i<0$, $u$ cannot have a turning point. We call these two situations
the ``cosh-like'' and ``sinh-like'' solutions, respectively. When
$C_i=0$, we have exponential solutions $u(x)=a\exp(\pm x)$, with the
constant $a$ and the sign is to be determined from the boundary
conditions: when $x<Q_1$ we have $C_i=0$ and we take the positive
sign. Conversely, we take the negative sign for $x>Q_N$.

To determine $C_i$ for $Q_i<x<Q_{i+1}$, $0<i<N$, given boundary values
$\widehat{u}_i,\widehat{u}_{i+1}$, we first determine the sign of $C_i$. If
$\widehat{u}_i\times \widehat{u}_{i+1}<0$, then by continuity there must be a root,
and so $C_i<0$. If $\widehat{u}_i\times \widehat{u}_{i+1}\geq 0$, we first
eliminate the case $C_i=0$ by fitting the exponential solution with
positive sign if $\widehat{u}_{i+1}>\widehat{u}_i$ and negative sign
otherwise. If the fit is successful, we have determined $C_i=0$. If it
isn't, then either the growth/decay is insufficiently large, and we
have a sinh-like solution with $C_i<0$; or, otherwise, and we have a cosh-like
solution with $C_i>0$.

If $C_i<0$, we determine the sign of $u_x$ from
$\sgn(\widehat{u}_{i+1}-\widehat{u}_i)=\sigma_i$. Then, we integrate
\eqref{eq:ci} to get
\begin{equation}
  \sgn(\widehat{u}_{i+1}-\widehat{u}_i)\int_{\widehat{u}_i}^{u(x)} \frac{\diff w}{(w^r-C_i)^{1/r}}
  =  x - Q_i.
\end{equation}
This implicit equation determines $u(x)$ given $x$, $C_i$, etc. We can
compute the integral using numerical quadrature.

In particular, we have
\begin{equation}
  \sgn(\widehat{u}_{i+1}-\widehat{u}_i)\int_{\widehat{u}_i}^{\widehat{u}_{i+1}}
  \frac{dw}{(w^r-C_i)^{1/r}}
  =  Q_{i+1} - Q_i.
  \label{eq:CuQ_neg}
\end{equation}
This implicit equation relates $C_i$, $Q_{i}$, $Q_{i+1}$, $\widehat{u}_i$
when $C_i<0$.

If $C_i>0$, we have a turning point when $u^r(x^*)=\pm
C_i^{1/r}$. There is only one turning point, because $u$ has no root,
and two turning points would imply a root. If $x^*\in [Q_i,Q_{i+1}]$,
then the sign of $u_X$ changes at the turning point. If $u_i$ and
$u_{i+1}$ are both positive (they need to have the same sign,
otherwise there is a root and $C_i$ must be negative), then $u_x$ must
be negative for $x\in (Q_i,x^*)$ and positive for $x\in
(x^*,Q_{i+1})$. The signs are reversed when $u_i$ and $u_{i+1}$ are
both negative.

Then, we have
\begin{equation}
  \label{eq:CuQ_pos}
  -\sgn(\widehat{u}_i)\int_{\widehat{u}_i}^{C_i^{1/r}}
  \frac{\diff w}{(u^r-C_i)^{1/r}}
  +
  \sgn(\widehat{u}_{i})\int_{C_i^{1/r}}^{\widehat{u}_{i+1}}
    \frac{\diff w}{(u^r-C_i)^{1/r}}
    = Q_{i+1} - Q_i.
\end{equation}
When computing the integrals by numerical quadrature, it is necessary to
remove the weak singularity by a change of variables $v^r=w^r-C_i$,
leading to, e.g.
\begin{equation}
  \int_{u_1}^{C_i^{1/2}} \frac{\diff w}{(w^r-C_i)^{1/r}}
  =  \int_{(u_1^r-C_i)^{1/r}}^{0}
  \frac{v^{r-2}\diff v}{(v^r+C_i)^{\frac{r-1}{r}}},
\end{equation}
for which the integrand would have no singularity at $w^r=C_i$. If $x^*$ is
outside the interval $\Omega_i$ (signalled by equation \eqref{eq:CuQ_pos}
having no solution), then $u_x$ has the same sign throughout the
interval, as determined by the difference $\widehat{u}_{i+1}-\widehat{u}_i$, and
we are back to the situation in \eqref{eq:CuQ_neg}.

\bibliographystyle{elsarticle-harv}
\bibliography{rch}

\end{document}